\documentclass[11pt, twoside, a4paper]{article}
\usepackage[english]{babel}
\usepackage{amsmath,amssymb,amsthm,epsfig}
\addtolength{\hoffset}{-.5cm} \addtolength{\textwidth}{1cm}
\addtolength{\voffset}{-.5cm} \addtolength{\textheight}{1cm}
\pagestyle{myheadings} 
\markboth{\textsc{\scriptsize{Rodrigo Bissacot and Eduardo Garibaldi}}} 
{\textsc{\scriptsize{Weak KAM methods and ergodic optimal problems for countable Markov shifts}}}

\newcommand{\keywords}[1]{\par\addvspace\baselineskip\noindent\textbf{Keywords:}\enspace\ignorespaces#1}

\newcommand{\AMSclassification}[1]{\par\addvspace\baselineskip\noindent\textbf{Mathematical subject classification:}\enspace\ignorespaces#1}

\numberwithin{equation}{section}

\newtheorem{theorem}{Theorem}
\newtheorem{proposition}[theorem]{Proposition}
\newtheorem{lemma}[theorem]{Lemma}

\newtheorem{definition}{Definition}
\newtheorem{notation}[definition]{Notation}
\newtheorem{remark}[definition]{Remark}

\begin{document}

\title{Weak KAM methods and ergodic optimal problems \\ for countable Markov shifts}
\author{Rodrigo Bissacot\\
\footnotesize{Departamento Matem\'atica, UFMG, 30161-970 Belo Horizonte -- MG, Brasil}\\
\footnotesize{\texttt{rodrigo.bissacot@gmail.com}}
\and
Eduardo Garibaldi\\
\footnotesize{Departamento de Matem\'atica, UNICAMP, 13083-859 Campinas -- SP, Brasil}\\
\footnotesize{\texttt{garibaldi@ime.unicamp.br}}}
\date{\today}
\maketitle

\begin{abstract}
Let $ \sigma : \boldsymbol{\Sigma} \to \boldsymbol{\Sigma} $ be the left shift acting on $ \boldsymbol{\Sigma} $, a one-sided Markov subshift on a 
countable alphabet. Our intention is to guarantee the existence of $\sigma$-invariant Borel probabilities that maximize the integral of a
given locally H\"older continuous potential $ A : \boldsymbol{\Sigma} \to \mathbb R $. Under certain conditions, we are able to show
not only that $A$-maximizing probabilities do exist, but also that they are characterized by the fact their support lies actually in a 
particular Markov subshift on a finite alphabet. To that end, we make use of objects dual to maximizing measures, the so-called sub-actions 
(concept analogous to subsolutions of the Hamilton-Jacobi equation), and specially the calibrated sub-actions (notion similar to
weak KAM solutions).

\keywords{weak KAM methods, countable Markov shifts, ergodic optimization, maximizimg measures, sub-actions.} 

\AMSclassification{37A05, 37A60, 37B10.}
\end{abstract}

\begin{section}{Introduction}

The development of the study of maximizing probabilities has given place to a new and exciting field in ergodic theory.
Growing in the intersection of topological dynamical systems and optimization theory, this fresh theorical branch is known nowadays as
\emph{ergodic optimization}. Many results were already obtained for dynamics defined by a continuous map $ T: X \to X $ 
of a compact metric space $ X $ assuming $ T $ has some hyperbolicity (see, for instance, \cite{CG, CLT, GL, Jenkinson, LT}).
Although ergodic optimal problems in the context of noncompact dynamical systems have been much less discussed, interesting 
works can be found in the literature (see, for example, \cite{JMU1, JMU2}).

The principal purpose of this article is to take into account ergodic optimal problems for a class of noncompact symbolic dynamics: topological Markov shifts
with a countable number of states. Let then $ \boldsymbol{\Sigma} $ denote a one-sided Markov subshift on a countable alphabet, and  
$ \sigma : \boldsymbol{\Sigma} \to \boldsymbol{\Sigma} $ the left shift map. If $ A : \boldsymbol{\Sigma} \to \mathbb R $ is continuous and 
bounded above, one would like to determine and describe the $\sigma$-invariant Borel probability measures $ \mu $ that maximize the average value 
$ \int A \; d\mu $. In general, such a maximizing probability does not even exist, since $ \boldsymbol{\Sigma} $ may be noncompact. 
We show that this is not the case when the potential $ A $ is sufficiently regular and verifies a coercive condition. 
In reality, our main theorem (see theorem~\ref{existence maximizing}) states that, for one of these 
specific potentials, its maximizing probabilities have in common the fact of being supported in a certain compact 
$\sigma$-invariant subset that is actually contained in a Markov subshift on a finite alphabet.

A second objective of this paper is to point out that weak KAM methods (or viscosity solutions technics) can be adapted and employed also in noncompact 
ergodic optimization. Tools of the theory of viscosity solutions have been successfully used in Lagrangian mechanics (see, for instance, \cite{CI, Fathi}). 
Ergodic optimization on compact spaces has witnessed the usefulness of these methods, specially when ergodic optimal problems are interpreted as questions of
variational dynamics (see, for example, \cite{CLT, GL, LT}). We adopt the same spirit and strategy here. 

\end{section}

\begin{section}{Basic concepts and main result}

Our dynamical setting will be special topologically mixing Markov subshifts on a countable alphabet: the \emph{primitive} ones. 
Let us introduce them precisely. 

For the sake of definiteness, the countably infinite alphabet will always be the set of nonnegative integers $ \mathbb Z_+ $. 
Let thus $ \mathbf M : \mathbb Z_+ \times \mathbb Z_+ \to \{0, 1 \} $ be a transition matrix. 
Consider the following sets of symbols given in an inductive way by
$$ \mathcal B_0 = \{ i \in \mathbb Z_+ : \mathbf M(i,j) = 1 \text{ for some } j \in \mathbb Z_+\} \;\; \text{ and } $$
$$ \mathcal B_n = \{ i \in \mathbb Z_+ : \mathbf M(i,j) = 1 \text{ for some } j \in \mathcal B_{n - 1} \}, \;\; \text{ for } \; n > 0. $$
We say that the transition matrix $ \mathbf M $ is \emph{primitive} if there exist 
a (possibly countable) subset $ \mathbb F \subseteq \mathbb Z_+ $ and an integer $ K_0 \geq 0 $ such that, 
for any pair of symbols $ i, j \in \bigcap_{n \ge 0} \mathcal B_n $, one can find $ \ell_1, \ell_2, \ldots, \ell_{K_0} \in \mathbb F $ satisfying
$$ \mathbf M(i, \ell_1) \mathbf M(\ell_1, \ell_2) \cdots \mathbf M(\ell_{K_0}, j) = 1. $$
In particular, we say that $ \mathbf M $ is \emph{finitely primitive} when $ \mathbb F $ is finite.

Consider then the associated Markov subshift
$$ \boldsymbol{\Sigma} = \left \{ \mathbf x = (x_0, x_1, \ldots) \in {\mathbb Z_+}^{\mathbb Z_+} : \mathbf M(x_j, x_{j + 1}) = 1 \right \}. $$
Fixed $ \lambda \in (0, 1) $, we equip $ \boldsymbol{\Sigma} $ with the complete 
metric $ d(\mathbf x, \mathbf y) = \lambda^k $, where
$ \mathbf x = (x_0, x_1, \ldots), \mathbf y = (y_0, y_1, \ldots) \in \boldsymbol{\Sigma} $ and $ k = \min \{j: x_j \ne y_j \} $.
It is easy to see that $ \boldsymbol{\Sigma} $ is compact if, and only if, $ \bigcap_{n \ge 0} \mathcal B_n $ is finite\footnote{Since the compact situation is 
well studied, the interesting case occurs naturally when $ \bigcap_{n \ge 0} \mathcal B_n $ is countable. The reader is thus invited to assume this hypothesis 
without hesitation.}. Let $ \sigma : \boldsymbol{\Sigma} \to \boldsymbol{\Sigma} $ be the shift map, namely, 
$ \sigma(x_0, x_1, x_2, \ldots) = (x_1, x_2, \ldots) $. We will also say that the dynamics $ (\boldsymbol{\Sigma}, \sigma) $ is (finitely) primitive. 
Since $ \mathbf M $ is primitive, clearly $ (\boldsymbol{\Sigma}, \sigma) $ is a topologically mixing dynamical system.

Denote by $ \mathcal M_\sigma $ the $\sigma$-invariant Borel probability measures. 
Let $ C^0(\boldsymbol{\Sigma}) $ indicate the space of continuous real-valued functions on $ \boldsymbol{\Sigma} $, 
equipped with the topology of uniform convergence on compact subsets. 
We remind then central concepts in the ergodic optimization theory.

\begin{definition}
If the potential $ A \in C^0(\boldsymbol{\Sigma}) $ is bounded above, we define the ergodic maximizing value by
$$ \beta_A = \sup_{\mu \in \mathcal M_\sigma} \int A \; d\mu. $$
Any $\sigma$-invariant probability achieving this supremum is called maximizing (or, if precision is required, $A$-maximizing). 
\end{definition}

We are particularly interested in ergodic optimal results for \emph{locally H\"older continuous} potentials.

\begin{definition}
A potential $ A : \boldsymbol{\Sigma} \to \mathbb R $ is called locally H\"older continuous
when there exists a constant $ H_A > 0 $ such that, for all integer $ k \ge 1 $, we have
$$ \text{Var}_k (A) := \sup_{\mathbf x, \mathbf y \in \boldsymbol{\Sigma}, \; d(\mathbf x, \mathbf y) \le \lambda^k} 
\left [ A(\mathbf x) - A(\mathbf y) \right ] \; \leq \; H_A \lambda^{k}. $$
\end{definition}

Such a regularity condition only means that the $k$-th variation $ \text{Var}_k (A) $ decays exponentially fast to
zero when $ k \rightarrow \infty $. We could focus on more general regularity assumptions, like summability of variations. 
Recall that $ A : \boldsymbol{\Sigma} \to \mathbb R $ has summable variations if
$$ \text{Var} (A) := \sum_{k = 1}^\infty \text{Var}_k (A) < \infty. $$
Yet one of our main goals here is to provide examples of the applicability of the weak KAM technics.
We believe local H\"older continuity is sufficient for this end. 

Notice that nothing is required from $ \text{Var}_0 (A) := \sup_{\mathbf x, \mathbf y \in \boldsymbol{\Sigma}} [A(\mathbf x) - A(\mathbf y)] $, 
which means that a locally H\"older continuous potential, despide its uniform continuity, may be unbounded. So a common assumption\footnote{Note 
this assumption is trivially verified when $ \mathbb F $ is finite. Indeed, choosing a point $ \mathbf x^i \in [i] $ for each $ i \in \mathbb F $, 
if $ \mathbf x \in [j] $, $ j \in \mathbb F $, then $ A(\mathbf x) > A(\mathbf x^j) - \text{Var}_1(A) $ obviously implies 
$ \inf  A|_{\bigcup_{i \in \mathbb F} [i]} > \min_{i \in \mathbb F} A(\mathbf x^i) - \text{Var}_1(A) > - \infty $.}\label{obviedade} 
in this article will be
$$ \inf  A|_{\bigcup_{i \in \mathbb F} [i]} > - \infty, $$
where $ [i] $ just indicates the cylinder set $ \{\mathbf x = (x_0, x_1, \ldots) \in \boldsymbol{\Sigma} : x_0 = i\} $. 

Under this hypothesis, we will obtain in the next section a dual formula
$$ \beta_A = \inf_{f \in C^0(\boldsymbol{\Sigma})} \sup_{\mathbf x \in \boldsymbol{\Sigma}} \left(A + f - f\circ\sigma \right)(\mathbf x). $$

This expression raises the natural question about the existence of
functions achieving the above infimum, which motivates the following definition.

\begin{definition}
Suppose $ A : \boldsymbol{\Sigma} \to \mathbb R $ is continuous and bounded above. 
A sub-action (for the potential $ A $) is a function $ u \in C^0(\boldsymbol{\Sigma}) $ verifying 
$$ (A + u - u\circ\sigma)(\mathbf x) \le \beta_A, \;\; \forall \; \mathbf x \in \boldsymbol{\Sigma}. $$ 
\end{definition}

We will see in section~\ref{maximizing} that it is possible to construct locally H\"older continuous sub-actions for
potentials with the same regularity (see proposition~\ref{minimal sub-action}). This result is completely new as far as
we know. 

In the context of a noncompact dynamical system, given an arbitrary bounded above continuous potential, the existence of maximizing probabilities is
a nontrivial question. However, we will be able to use the existence of sub-actions as well as their properties in order to guarantee there exist maximizing
probabilities when we are taking into account coercive potentials.

\begin{definition}
A continuous potential $ A: \boldsymbol{\Sigma} \to \mathbb R $ is said coercive when
$$ \lim_{i \to + \infty} \sup A|_{[i]} = - \infty. $$
\end{definition}

In Aubry-Mather theory for Lagrangian systems, superlinearity is the usual coercive hypothesis (see, for instance, \cite{CI, Fathi}). 
The coercive condition is not strange to the countable Markov shift framework. On the contrary, it is an essential theorical piece 
(in general implicitly) in several studies of the thermodynamic formalism generalized to a finitely primitive Markov subshift on a 
countable alphabet. Coerciveness obviously follows from the imposition $ \sum_{i} \exp{(\sup A|_{[i]})} < \infty $. This summability condition is 
equivalent to the finiteness of the topological pressure when the potential $ A $ is, for example, locally H\"older continuous. 
This summability condition also allows to define the Ruelle operator 
$ \mathcal L_A f(\mathbf x) := \sum_{\sigma(\mathbf y) = \mathbf x} e^{A(\mathbf y)} f(\mathbf y) $ 
for a bounded continuous function $ f : \boldsymbol{\Sigma} \to \mathbb R $. For more details, we refer the reader to the book of 
R. D. Mauldin and M. Urba\'nski (see \cite{MU}). Furthermore, when $ (\boldsymbol{\Sigma}, \sigma) $ is finitely primitive and $ A $ is locally H\"older 
continuous, it is not difficult to show the hypothesis $ \| \mathcal L_A 1 \|_{\infty} < \infty $ 
(omnipresent in the work of O. Sarig \cite{Sarig}) implies coerciveness too. 

Given a nonnegative integer $ I $, denote by
$$ \Sigma_I = \left\{\mathbf x = (x_0, x_1, \ldots) \in \{0, \ldots, I\}^{\mathbb Z_+} : \mathbf M(x_j, x_{j + 1}) = 1 \right\} $$
the Markov subshift on the finite alphabet $ \{ \iota_1, \ldots, \iota_{r_I}\} := \{0, \ldots, I\} \cap \left( \bigcap_{n \ge 0} \mathcal B_n \right) $ 
associated to the transition matrix $ \mathbf M|_{\{0, \ldots, I\} \times \{0, \ldots, I\}} $. 
Obviously $ \Sigma_I $ is a compact $\sigma$-invariant subset of $ \boldsymbol{\Sigma}$. 
So we simply denote $ \sigma |_{\Sigma_I} $ by $ \sigma $.

When $ \mathbf M $ is finitely primitive, let
$$ I_{\mathbb F} := \max \{i : i \in \mathbb F\}. $$ 
Our main result concerning the existence of maximizing probabilities can be stated as follows. 

\begin{theorem}\label{existence maximizing}
Suppose $ (\boldsymbol{\Sigma}, \sigma) $ is a finitely primitive Markov subshift on a countable alphabet. Let 
$ A: \boldsymbol{\Sigma} \to \mathbb R $ be a bounded above, coercive and locally H\"older continuous potential. 
Then there exists an integer $ \hat I > I_{\mathbb F} $ such that 
$$ \beta_A = \max_{\substack{\mu \in \mathcal M_\sigma \\ \text{supp} \mu \subseteq \Sigma_{\hat I}}} \int A \; d \mu. $$
In particular, maximizing measures do exist. Furthermore, there exists a compact $\sigma$-invariant set $ \Omega \subseteq \Sigma_{\hat I} $
such that $ \mu \in \mathcal M_\sigma $ is an $A$-maximizing probability if, and only if, $ \mu $ is supported in $ \Omega $. 
\end{theorem}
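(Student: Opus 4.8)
The plan is to combine the dual formula and the calibrated sub-action provided by proposition~\ref{minimal sub-action} with the coercive hypothesis, the latter being the ingredient that compactifies an a priori noncompact variational problem. Throughout I would write $ s_i := \sup A|_{[i]} $, so that coerciveness reads $ s_i \to -\infty $, fix a locally H\"older continuous sub-action $ u $, and set $ \bar A := A + u - u\circ\sigma $, so that $ \bar A \le \beta_A $ everywhere on $ \boldsymbol{\Sigma} $. The three assertions (attainment, finite-alphabet support, and the characterization by $ \Omega $) will be read off from a single pointwise fact: that $ \bar A $ drops strictly below $ \beta_A $ on all sufficiently high cylinders.

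First I would settle existence, the softest part. For any $ \mu \in \mathcal M_\sigma $, splitting $ \int A\,d\mu $ over the cylinders $ [i] $ and using $ A|_{[i]} \le s_i $ together with $ \sum_i \mu([i]) = 1 $ gives $ \int A\,d\mu \le \sup A - \sum_i (\sup A - s_i)\,\mu([i]) $. Since $ \sup A - s_i \to +\infty $, any measure whose average is close to $ \beta_A $ must place almost all of its mass on low cylinders, with a modulus uniform in $ \mu $. This yields tightness of a maximizing sequence and uniform integrability of the negative part of $ A $, so a weak$ {}^* $ limit is again $\sigma$-invariant and attains $ \beta_A $; hence maximizing probabilities exist.

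The heart of the matter is to produce an integer $ \hat I > I_{\mathbb F} $ with $ \bar A(\mathbf x) < \beta_A $ whenever $ \mathbf x \in [i] $ and $ i > \hat I $. Since $ \bar A(\mathbf x) \le s_i + \big(u(\mathbf x) - u(\sigma\mathbf x)\big) $ and $ s_i \to -\infty $, it suffices to control $ u(\mathbf x) - u(\sigma\mathbf x) $ from above, and for this the key step is to show that $ u $ may be taken \emph{bounded}. Here all hypotheses conspire: coerciveness forces any long orbit visiting high cylinders to have average well below $ \beta_A $, which caps $ u $ from above through the Ma\~n\'e-type construction underlying proposition~\ref{minimal sub-action}; conversely, finite primitivity lets one reconnect any two core symbols by a path of bounded length $ K_0 $ running inside $ \bigcup_{i\in\mathbb F}[i] $, whose $ A $-cost is bounded below by the standing assumption $ \inf A|_{\bigcup_{i\in\mathbb F}[i]} > -\infty $, and this splicing produces the matching lower bound on $ u $. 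With $ \|u\|_\infty < \infty $ one gets $ \bar A \le s_i + 2\|u\|_\infty $ on $ [i] $, which is $ < \beta_A $ for all $ i $ past some $ \hat I $. I expect precisely this step --- upgrading the merely asymptotic smallness of the mass on high cylinders into \emph{uniform} control of the sub-action, where finite primitivity is used to splice orbits at bounded cost --- to be the main obstacle.

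Granting boundedness of $ u $, the rest is formal. Then $ \int (u - u\circ\sigma)\,d\mu = 0 $ for every $ \mu \in \mathcal M_\sigma $, so $ \int A\,d\mu = \int \bar A\,d\mu \le \beta_A $, with equality exactly when $ \bar A = \beta_A $ holds $ \mu $-almost everywhere, hence on $ \mathrm{supp}\,\mu $ by continuity. As $ \mathrm{supp}\,\mu $ is $\sigma$-invariant and every point of it lies in $ \{\bar A = \beta_A\} $, each of its coordinates is a first symbol bounded by $ \hat I $, so $ \mathrm{supp}\,\mu \subseteq \Sigma_{\hat I} $; this gives the finite-alphabet support and, since $ \mathcal M_\sigma(\Sigma_{\hat I}) $ is weak$ {}^* $ compact with $ \mu \mapsto \int A\,d\mu $ continuous on it, the equality $ \beta_A = \max_{\mathrm{supp}\,\mu\subseteq\Sigma_{\hat I}} \int A\,d\mu $. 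Finally I would let $ \Omega $ be the largest $\sigma$-invariant subset of $ \Sigma_{\hat I}\cap\{\bar A = \beta_A\} $, a closed (hence compact) $\sigma$-invariant set; on the compact piece $ \Sigma_{\hat I} $ integrability is automatic, so for $ \mu $ supported there $ \int A\,d\mu = \beta_A $ iff $ \bar A \equiv \beta_A $ on $ \mathrm{supp}\,\mu $, i.e.\ iff $ \mathrm{supp}\,\mu \subseteq \Omega $, which is the asserted characterization.
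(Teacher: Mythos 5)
Your proposal is correct in substance, but it follows a genuinely different route from the paper. You prove existence first, by a Prokhorov-type compactness argument: coerciveness forces every near-maximizing invariant measure to concentrate its first-coordinate mass on low cylinders, whence tightness, and upper semicontinuity of $ \mu \mapsto \int A \, d\mu $ (valid because $ A $ is continuous and bounded above) shows that a weak$^*$ limit point attains $ \beta_A $. The paper deliberately avoids this route: it never invokes measure-theoretic compactness, but instead works with the compact approximations $ (\Sigma_I, \sigma) $, proves a uniform oscillation bound for \emph{calibrated} sub-actions (lemma~\ref{oscillation calibrated}), deduces the stabilization $ \beta_A(I) = \beta_A(\hat I) $ for all $ I \ge \hat I $ (lemma~\ref{fixed beta}), and then gets the central inequality $ \int A \, d\mu \le \beta_A(\hat I) $ for every invariant $ \mu $ by approximating Birkhoff-generic orbits with periodic orbits spliced through $ \mathbb F $. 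Your localization step (every maximizing measure lives in $ \Sigma_{\hat I} $) is essentially the paper's proposition~\ref{sub-action and finite alphabet}, but your construction of $ \Omega $ as the maximal $\sigma$-invariant subset of $ \Sigma_{\hat I} \cap \{A + u - u\circ\sigma = \beta_A\} $ replaces the paper's appeal to the compact-case facts~(\ref{Aubry and contact}) and~(\ref{Mather and Aubry}) (the non-wandering set $ \Omega(A, \hat I) $) by a short self-contained verification of both implications, which is sound. What each approach buys: yours is shorter and more elementary, needing neither calibrated sub-actions nor the compact ergodic optimization theory; the paper's approach showcases the weak KAM machinery it set out to advertise and yields extra output --- the stabilization lemma and, since the periodic-approximation argument uses only inequality~(\ref{definition Ihat}) rather than coerciveness itself, the more general theorem at the end of the paper, which your tightness argument (genuinely dependent on coerciveness) cannot reach.

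Two points in your write-up need tightening. First, uniform smallness of $ \mu_n\bigl(\bigcup_{i > I}[i]\bigr) $ alone does not give tightness: the sets $ \bigcup_{i \le I}[i] $ are not compact, since later coordinates are unconstrained. You must use $\sigma$-invariance of the $ \mu_n $ to transfer the first-coordinate bound to every coordinate, and then take a compact set of the form $ \{\mathbf x : x_j \le I_j \ \forall j\} $ with $ I_j $ growing fast enough; this is standard but it is exactly where invariance enters, and your sketch omits it. Second, you attribute the boundedness of the sub-action partly to coerciveness ("coerciveness \ldots caps $ u $ from above"); in fact proposition~\ref{minimal sub-action} delivers the bounded, nonnegative sub-action from primitivity and $ \inf A|_{\bigcup_{i \in \mathbb F}[i]} > -\infty $ alone, with no coerciveness --- the cap comes from splicing inverse orbits with periodic orbits through $ \mathbb F $. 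Since you cite that proposition anyway, the misattribution is harmless, and the step you flag as "the main obstacle" is already resolved there; coerciveness is needed only afterwards, to push $ s_i + \mathrm{osc}(u) $ below $ \beta_A $ for large $ i $ (for which you should also note $ \beta_A \ge \inf A|_{\bigcup_{i \in \mathbb F}[i]} > -\infty $, guaranteed by the periodic measure $ \mu_{\mathbb F} $).
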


Its proof is discussed in section~\ref{maximizing} and exploits the analogy with Aubry-Mather theory in symbolic dynamics. 
The existence of a bounded continuous sub-action for the potential $ A $ will tell us where to seek maximizing probabilities. 
Nevertheless, a key step to the demonstration is to analyse first the problem for the compact situation $ (\Sigma_I, \sigma) $, 
using the uniform oscillatory behavior of some special sub-actions, that are called calibrated,
and should be understood as corresponding to Fathi's weak KAM solutions or viscosity solutions of the Hamilton-Jacobi equation.

Theorem~\ref{existence maximizing} clarifies previous results. For instance, in the special case where $ \sum_{i} \exp{(\sup A|_{[i]})} < \infty $,
the identity  $ \beta_A = \max_{\mu \in \mathcal M_\sigma, \, \text{supp} \mu \subseteq \Sigma_{\hat I}} \int A \; d \mu $
is implicitly present in the work of I. D. Morris. Indeed, in the proof of lemma 3.5 of \cite{Morris}, one obtains that, if $ \{\mu_t\}_{t > 1} $ 
is the family of equilibrium states of $ tA $, then there is $ \hat I \in \mathbb Z^+ $ such that $ \mu_t([i]) \to 0 $ as $ t \to \infty $
for all $ i > \hat I $. Since this family of probabilities is uniformly tight and any accumulation measure is maximizing, this shows that
an $A$-maximizing probability exists and is supported in $ \Sigma_{\hat I} $. Concerning the description of all $A$-maximizing probabilities,
in \cite{JMU1} the authors obtained in a more general context a not so precise characterization for their supports 
(see remark~\ref{melhoria}). 

\end{section}

\begin{section}{Characterizations of the ergodic maximizing value}

We will present other expressions which one could choose in
order to introduce the constant $ \beta_A $ for our particular situation. 
In this section, we will consider a larger class of potentials: the uniformly continuous ones.
Remind that $ A : \boldsymbol{\Sigma} \to \mathbb R $ is uniformly continuous if $ \lim_{k \to \infty} \text{Var}_k (A) = 0 $.
Notice we are still dealing with functions which may be unbounded.
 
Given $ A \in C^0(\boldsymbol{\Sigma}) $, as usual let $ S_k A = \sum_{j = 0}^{k - 1} A \circ \sigma^j $ and $ S_0 A = 0 $. 
Hence, the following result identifies the ergodic maximizing value with a maximum ergodic time average.

\begin{proposition}
Let $ (\boldsymbol{\Sigma}, \sigma) $ be a primitive Markov subshift on a countable alphabet. 
Assume the uniformly continuous potential $ A : \boldsymbol{\Sigma} \to \mathbb R $ is bounded above and satisfies 
$ \inf A|_{\bigcup_{i \in \mathbb F} [i]} > - \infty $. Then we verify
$$ \beta_A = \lim_{k \to \infty} \sup_{\mathbf x \in \boldsymbol{\Sigma}} \frac{1}{k} S_k A(\mathbf x) =
\inf_{k \ge 1} \sup_{\mathbf x \in \boldsymbol{\Sigma}} \frac{1}{k} S_k A(\mathbf x). $$
\end{proposition}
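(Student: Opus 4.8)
The plan is to set $ a_k := \sup_{\mathbf x \in \boldsymbol{\Sigma}} S_k A(\mathbf x) $ and to prove the two equalities separately, obtaining the second from an abstract subadditivity argument and the first --- the identification with $ \beta_A $ --- from a periodic orbit approximation. Since $ A $ is bounded above, say $ A \le M $, one has $ a_k \le kM < \infty $; and writing $ S_{m+n} A(\mathbf x) = S_m A(\mathbf x) + S_n A(\sigma^m \mathbf x) $ and using that $ \sigma^m $ maps $ \boldsymbol{\Sigma} $ into itself yields the subadditivity $ a_{m+n} \le a_m + a_n $. Fekete's lemma then gives $ \lim_{k \to \infty} a_k/k = \inf_{k \ge 1} a_k/k $, which is precisely the second equality in the statement. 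It remains to show this common value, call it $ \alpha $, coincides with $ \beta_A $.

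For the inequality $ \beta_A \le \alpha $ I would argue directly: for any $ \mu \in \mathcal M_\sigma $ and any $ k \ge 1 $, invariance gives $ \int S_k A \, d\mu = k \int A \, d\mu $, whence $ \int A \, d\mu = \frac{1}{k} \int S_k A \, d\mu \le a_k / k $. Taking the supremum over $ \mu $ and then the infimum over $ k $ yields $ \beta_A \le \inf_{k} a_k/k = \alpha $. Boundedness of $ A $ from above makes $ \int A \, d\mu $ well defined in $ [-\infty, M] $, so this computation is legitimate even when $ A $ fails to be integrable.

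The reverse inequality $ \alpha \le \beta_A $ is the crux, and it is precisely here that noncompactness is dangerous: the naive approach of forming empirical measures along near-optimal orbits and extracting a weak-$ * $ limit can fail, since mass may escape to infinity and no tightness is available under these hypotheses. The device I would use instead is to build genuinely invariant measures by hand, exploiting primitivity to close up orbits. Fix $ \varepsilon_k \downarrow 0 $ and pick $ \mathbf x^{(k)} = (x_0, x_1, \ldots) $ with $ \frac{1}{k} S_k A(\mathbf x^{(k)}) \ge a_k/k - \varepsilon_k $. Every symbol occurring in a point of $ \boldsymbol{\Sigma} $ lies in $ \bigcap_{n \ge 0} \mathcal B_n $ (follow the tail of the sequence), so primitivity furnishes $ \ell_1, \ldots, \ell_{K_0} \in \mathbb F $ with $ \mathbf M(x_{k-1}, \ell_1) \mathbf M(\ell_1, \ell_2) \cdots \mathbf M(\ell_{K_0}, x_0) = 1 $. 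I would then let $ \mathbf p $ be the periodic point with repeating block $ (x_0, \ldots, x_{k-1}, \ell_1, \ldots, \ell_{K_0}) $, of period $ n_k := k + K_0 $, and take $ \mu_k := \frac{1}{n_k} \sum_{j=0}^{n_k - 1} \delta_{\sigma^j \mathbf p} \in \mathcal M_\sigma $.

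The remaining estimate compares $ \int A \, d\mu_k = \frac{1}{n_k} S_{n_k} A(\mathbf p) $ with $ \frac{1}{k} S_k A(\mathbf x^{(k)}) $. Splitting $ S_{n_k} A(\mathbf p) $ into its first $ k $ terms and its last $ K_0 $ terms, one controls the first block by uniform continuity: since $ \sigma^j \mathbf p $ and $ \sigma^j \mathbf x^{(k)} $ agree on their initial $ k - j $ coordinates, the total error is at most $ \sum_{m=1}^{k} \text{Var}_m(A) $, and dividing by $ n_k \sim k $ this tends to $ 0 $ by Cesàro, using only $ \text{Var}_m(A) \to 0 $ (so summability is not needed). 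The last $ K_0 $ terms correspond to points $ \sigma^j \mathbf p $ that begin with a symbol $ \ell \in \mathbb F $, hence lie in $ \bigcup_{i \in \mathbb F}[i] $; by the standing hypothesis $ \inf A|_{\bigcup_{i \in \mathbb F}[i]} > -\infty $ together with $ A \le M $, these finitely many terms stay bounded, so their contribution divided by $ n_k $ also vanishes. Combining these facts with $ k/n_k \to 1 $ gives $ \int A \, d\mu_k \to \alpha $, and since $ \int A \, d\mu_k \le \beta_A $ for every $ k $ we conclude $ \alpha \le \beta_A $. The main obstacle is therefore the $ \alpha \le \beta_A $ direction, and the essential insight is that this closing-up construction replaces a compactness argument, showing exactly why the hypothesis $ \inf A|_{\bigcup_{i \in \mathbb F}[i]} > -\infty $ (in place of coercivity) together with mere uniform continuity already suffice.
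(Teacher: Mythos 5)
Your proof is correct and follows essentially the same route as the paper: Fekete subadditivity for the second equality, the invariance identity $ \int S_k A \, d\mu = k \int A \, d\mu $ for $ \beta_A \le \inf_k a_k / k $, and a primitivity-based periodic closing of near-optimal orbits --- with the error controlled by a Ces\`aro average of the variations plus the bound $ \inf A|_{\bigcup_{i \in \mathbb F} [i]} > -\infty $ on the $ K_0 $ connecting terms --- for the reverse inequality. The only cosmetic difference is bookkeeping: the paper shadows $ \mathbf x^k $ by a periodic point of period exactly $ k $ (matching its first $ k - K_0 $ symbols), whereas you keep all $ k $ symbols and append the connecting word to obtain period $ k + K_0 $; the resulting estimates are the same.
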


\begin{proof}
Note that $ \{ \sup_{\mathbf x \in \boldsymbol{\Sigma}} S_k A(\mathbf x) \}_{k \ge 1} $ is a subadditive sequence of real numbers.
Therefore, the limit $ \lim_{k \to \infty} \sup_{\mathbf x \in \boldsymbol{\Sigma}} \frac{1}{k} S_k A(\mathbf x) $
exists and is in fact equal to $ \inf_{k \ge 1} \sup_{\mathbf x \in \boldsymbol{\Sigma}} \frac{1}{k} S_k A(\mathbf x) $.

Given a positive integer $ k $, take a point $ \mathbf x^k \in \boldsymbol{\Sigma} $ satisfying
$$ \sup_{\mathbf x \in \boldsymbol{\Sigma}} \frac{1}{k} S_k A(\mathbf x) - \frac{1}{2^k} < \frac{1}{k} S_k A(\mathbf x^k). $$
Since $ (\boldsymbol{\Sigma}, \sigma) $ is a primitive Markov subshift, for all sufficiently large $ k $,
we can find a periodic point $ \mathbf y^k = (y_0^k, y_1^k, \ldots) \in \boldsymbol{\Sigma} $ of period $ k $, with $ y_j^k \in \mathbb F $ 
for each $ j \in \{k - K_0, \ldots, k - 1\} $, such that $ d(\mathbf x^k, \mathbf y^k) \le \lambda^{k - K_0} $. 
From the immediate inequality
$$ \frac{1}{k} S_k A(\mathbf y^k) \le \beta_A, $$
we obtain
\begin{align*}
\frac{1}{k} S_k A(\mathbf x^k) & \le \frac{1}{k} S_k A(\mathbf x^k) - \frac{1}{k} S_k A(\mathbf y^k) + \beta_A \\
& \le \frac{1}{k} \left[ \text{Var}_{k - K_0}(A) + \ldots + \text{Var}_1 (A) + K_0 \left(\sup A - \inf A|_{\bigcup_{i \in \mathbb F} [i]} \right) \right]
+ \beta_A.
\end{align*}
For $ k $ large enough, we thus have
$$ \sup_{\mathbf x \in \boldsymbol{\Sigma}} \frac{1}{k} S_k A(\mathbf x) - \frac{1}{2^k} <
\frac{1}{k} \left[\sum_{j=1}^k \text{Var}_j (A) + K_0 \left(\sup A - \inf A|_{\bigcup_{i \in \mathbb F} [i]} \right) \right] + \beta_A. $$ 
So $ \lim_{k \to \infty} \sup_{\mathbf x \in \boldsymbol{\Sigma}} \frac{1}{k} S_k A(\mathbf x) \le \beta_A $.

In order to show the equality does hold, take a probability $ \mu \in \mathcal M_\sigma $ such that $ A \in L^1(\mu) $.
For any $ k > 0 $, we clearly have 
$$ \int A \, d\mu = \int \frac{1}{k} S_k A \, d\mu \le \sup_{\mathbf x \in \boldsymbol{\Sigma}} \frac{1}{k} S_k A(\mathbf x). $$
Taking the infimum over $ k $ and then the supremum over $ \mu $, we finish the proof.
\end{proof}

We remark that, for a noncompact dynamical system, in general we have
$$ \beta_A \le \limsup_{k \to \infty} \sup_{\mathbf x \in \boldsymbol{\Sigma}} \frac{1}{k} S_k A(\mathbf x). $$
We refer the reader to \cite{JMU2} for a discussion on such a topic.

We present now a dual characterization of $ \beta_A $.

\begin{proposition}
Let $ (\boldsymbol{\Sigma}, \sigma) $ be a primitive Markov subshift on a countable alphabet. 
Suppose the uniformly continuous potential $ A : \boldsymbol{\Sigma} \to \mathbb R $ is bounded above and verifies 
$ \inf A|_{\bigcup_{i \in \mathbb F} [i]} > - \infty $. Then
$$ \beta_A = \inf_{f \in C^0(\boldsymbol{\Sigma})} \sup_{\mathbf x \in \boldsymbol{\Sigma}} \left(A + f - f\circ\sigma \right)(\mathbf x). $$
\end{proposition}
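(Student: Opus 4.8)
The plan is to prove the two inequalities separately, using throughout the identity $ \beta_A = \lim_{k \to \infty} \sup_{\mathbf x} \frac{1}{k} S_k A(\mathbf x) = \inf_{k \ge 1} \sup_{\mathbf x} \frac{1}{k} S_k A(\mathbf x) $ supplied by the preceding proposition.

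For the inequality $ \inf_{f} \sup_{\mathbf x}(A + f - f\circ\sigma) \le \beta_A $, I would exhibit, for each integer $ n \ge 1 $, an explicit continuous coboundary that transforms $ A $ into the Fej\'er-type average $ \frac{1}{n} S_n A $. Concretely, set $ f_n = -\frac{1}{n}\sum_{j=1}^{n-1} S_j A $, which is a finite sum of continuous functions and hence lies in $ C^0(\boldsymbol{\Sigma}) $ (possibly unbounded, which is harmless here). Using the telescoping identity $ S_j A - S_j A\circ\sigma = A - A\circ\sigma^j $, a direct computation gives $ A + f_n - f_n\circ\sigma = \frac{1}{n} S_n A $. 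Consequently $ \sup_{\mathbf x}(A + f_n - f_n\circ\sigma) = \sup_{\mathbf x}\frac{1}{n} S_n A $, and letting $ n \to \infty $ yields $ \inf_{f \in C^0(\boldsymbol{\Sigma})}\sup_{\mathbf x}(A + f - f\circ\sigma) \le \inf_{n \ge 1}\sup_{\mathbf x}\frac{1}{n} S_n A = \beta_A $.

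For the reverse inequality $ \beta_A \le \inf_{f}\sup_{\mathbf x}(A + f - f\circ\sigma) $, I fix an arbitrary $ f \in C^0(\boldsymbol{\Sigma}) $ and work with periodic orbits rather than with general invariant measures. Reusing the construction in the preceding proposition, for all sufficiently large $ k $ one obtains a periodic point $ \mathbf y^k $ of period $ k $ (with its last $ K_0 $ coordinates in $ \mathbb F $) for which $ \frac{1}{k} S_k A(\mathbf y^k) \to \beta_A $: indeed $ \frac{1}{k} S_k A(\mathbf y^k) \le \beta_A $, while the approximation estimate together with $ \frac{1}{k}\sum_{j=1}^{k}\text{Var}_j(A) \to 0 $ gives $ \liminf_k \frac{1}{k} S_k A(\mathbf y^k) \ge \beta_A $. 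Since $ \sigma^k \mathbf y^k = \mathbf y^k $, the coboundary telescopes to nothing along the orbit, that is, $ S_k(A + f - f\circ\sigma)(\mathbf y^k) = S_k A(\mathbf y^k) + f(\mathbf y^k) - f(\sigma^k \mathbf y^k) = S_k A(\mathbf y^k) $. Hence $ \sup_{\mathbf x}(A + f - f\circ\sigma) \ge \frac{1}{k} S_k A(\mathbf y^k) $, and taking $ k \to \infty $ gives $ \sup_{\mathbf x}(A + f - f\circ\sigma) \ge \beta_A $ for every such $ f $.

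The main obstacle is precisely the potential unboundedness of $ f \in C^0(\boldsymbol{\Sigma}) $ in the reverse inequality: the usual compact-space argument invoking $ \int (f - f\circ\sigma)\, d\mu = 0 $ is unavailable when $ f \notin L^1(\mu) $. The periodic-orbit device circumvents this entirely, because periodicity forces the exact cancellation $ f(\mathbf y^k) = f(\sigma^k \mathbf y^k) $ with no integrability hypothesis whatsoever. The only remaining care is to verify, exactly as in the preceding proposition, that primitivity produces period-$k$ points whose Birkhoff averages approach $ \beta_A $, where the requirement $ y_j^k \in \mathbb F $ on the last $ K_0 $ coordinates combined with $ \inf A|_{\bigcup_{i \in \mathbb F}[i]} > -\infty $ is what controls the error terms.
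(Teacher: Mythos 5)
Your proposal is correct, and while your first inequality is essentially the paper's own argument, your proof of the hard direction takes a genuinely different route. For $\inf_{f} \sup (A + f - f\circ\sigma) \le \beta_A$, the paper uses $f_k = -\frac{1}{k}\sum_{j=1}^{k} S_j A$, for which $A + f_k - f_k\circ\sigma = \frac{1}{k}S_k(A\circ\sigma)$; your choice $f_n = -\frac{1}{n}\sum_{j=1}^{n-1}S_j A$, giving exactly $\frac{1}{n}S_n A$, is the same device up to an index shift, and both conclude via the preceding proposition. For the reverse direction, the paper picks $f$ with $A + f - f\circ\sigma < \varkappa + \epsilon$ (where $\varkappa$ denotes the infimum) and integrates against an arbitrary $\mu \in \mathcal M_\sigma$, asserting $\int A \, d\mu = \int (A + f - f\circ\sigma)\, d\mu$; this silently uses that the coboundary of a possibly unbounded continuous $f$ integrates to zero, which, as you correctly observe, is not automatic when $f \notin L^1(\mu)$ (it can be repaired by a semi-integrability argument combined with Birkhoff's theorem and the fact that $\frac{1}{n} f\circ\sigma^n \to 0$ in probability, but the paper does not address it). Your periodic-orbit argument bypasses the issue entirely: along a period-$k$ orbit the sum $S_k(A + f - f\circ\sigma)(\mathbf y^k)$ collapses to $S_k A(\mathbf y^k)$ with no integrability hypothesis, and the points $\mathbf y^k$ furnished by primitivity in the preceding proposition's proof satisfy $\frac{1}{k}S_k A(\mathbf y^k) \to \beta_A$ (upper bound because orbital measures are invariant, lower bound from the variation estimate together with the Ces\`aro convergence $\frac{1}{k}\sum_{j \le k}\text{Var}_j(A) \to 0$, valid by uniform continuity). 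What each approach buys: the paper's is shorter and mirrors the standard duality argument of compact ergodic optimization; yours is more robust on the unboundedness issue and shows in passing that the duality gap already closes against periodic-orbit measures. The one presentational caveat is that you reuse the construction \emph{inside} the preceding proposition's proof, not merely its statement, so a complete write-up should restate the estimate $\frac{1}{k}S_k A(\mathbf x^k) - \frac{1}{k}S_k A(\mathbf y^k) \le \frac{1}{k}\bigl[\sum_{j=1}^{k-K_0}\text{Var}_j(A) + K_0\bigl(\sup A - \inf A|_{\bigcup_{i \in \mathbb F}[i]}\bigr)\bigr]$; since you indicate exactly this, it is not a gap.
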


\begin{proof}
Denote by $ C^0_A(\boldsymbol{\Sigma}) $ the set of continuous functions $ f : \boldsymbol{\Sigma} \to \mathbb R $ satisfying
$ \sup (A + f - f\circ\sigma) < \infty $. Note that all bounded continuous real-valued functions belong to $ C^0_A(\boldsymbol{\Sigma}) $. 
Moreover, we clearly have
$$ \inf_{f \in C^0(\boldsymbol{\Sigma})} \sup_{\mathbf x \in \boldsymbol{\Sigma}} \left(A + f - f\circ\sigma \right)(\mathbf x) =
\inf_{f \in C^0_A(\boldsymbol{\Sigma})} \sup_{\mathbf x \in \boldsymbol{\Sigma}} \left(A + f - f\circ\sigma \right)(\mathbf x) < \infty. $$

By conciseness, write 
$ \varkappa = \inf_{f \in C^0_A(\boldsymbol{\Sigma})} \sup_{\mathbf x \in \boldsymbol{\Sigma}} \left(A + f - f\circ\sigma \right)(\mathbf x) $.
Fix $ \epsilon > 0 $. Choose a function $ f \in C^0_A(\boldsymbol{\Sigma}) $ such that $ A + f - f\circ\sigma < \varkappa + \epsilon $.
For any $ \mu \in \mathcal M_\sigma $, we verify
$$ \int A \; d\mu = \int (A + f - f\circ\sigma) \; d\mu \le \varkappa + \epsilon. $$
Hence, $ \beta_A \le \varkappa + \epsilon $. Since $ \epsilon > 0 $ is arbitrary, we get $ \beta_A \le \varkappa $.

Consider then $ f_k = - \frac{1}{k} \sum_{j = 1}^k S_j A \in C^0(\boldsymbol{\Sigma}) $. The identity
$$ A = \frac{1}{k} S_k (A\circ\sigma) + f_k\circ\sigma - f_k $$ 
implies $ \sup (A + f_k - f_k\circ\sigma) = \sup \frac{1}{k} S_k (A\circ\sigma) \le \sup A < \infty $,
that is, $ f_k \in C^0_A(\boldsymbol{\Sigma}) $. Therefore, we obtain
$$ \varkappa \le \inf_{k \ge 1} \sup_{\mathbf x \in \boldsymbol{\Sigma}} \frac{1}{k} S_k (A\circ\sigma)(\mathbf x). $$
The result follows thus from the previous proposition.
\end{proof}

In ergodic optimization on compact spaces, a similar dual expression of the corresponding ergodic maximizing value is well known
(see, for example, \cite{CG}).

\end{section}

\begin{section}{Sub-actions and maximizing probabilities}\label{maximizing}

\begin{subsection}*{A minimal sub-action}

We will show the existence of minimal sub-actions for locally H\"older continuous potentials.
Similar results have been obtained in the compact situation (see, for example, \cite{CLT, GL}).

\begin{proposition}\label{minimal sub-action}
Assume $ (\boldsymbol{\Sigma}, \sigma) $ is a primitive Markov subshift on a countable alphabet. 
Let $ A : \boldsymbol{\Sigma} \to \mathbb R $ be a bounded above and locally H\"older continuous potential such that $ \inf A|_{\bigcup_{i \in \mathbb F} [i]} > - \infty $. 
Then there exists an unique minimal, nonnegative, bounded and locally H\"older continuous function $ u_A: \boldsymbol{\Sigma} \to \mathbb R_+ $ verifying 
$$ A + u_A - u_A\circ\sigma \le \beta_A. $$ 
The minimality is in the sense that, for any nonnegative sub-action $ u \in C^0(\boldsymbol{\Sigma}, \mathbb R_+) $ 
(not necessarily locally H\"older continuous), we have $ u_A \leq u $.
\end{proposition}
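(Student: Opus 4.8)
The plan is to construct $ u_A $ explicitly as a Ma\~n\'e-type potential and then read off each required property in turn. Writing $ \tilde A = A - \beta_A $, I would set
$$ u_A(\mathbf x) = \sup_{n \ge 0} \; \sup_{\sigma^n(\mathbf y) = \mathbf x} S_n \tilde A(\mathbf y), $$
with the convention $ S_0 \tilde A \equiv 0 $, so that the term $ n = 0 $ immediately forces $ u_A \ge 0 $. The sub-action inequality is essentially built in: for $ \sigma^n(\mathbf y) = \mathbf x $ one has $ S_n \tilde A(\mathbf y) + \tilde A(\mathbf x) = S_{n+1} \tilde A(\mathbf y) $ with $ \sigma^{n+1}(\mathbf y) = \sigma \mathbf x $, so passing to suprema yields $ u_A(\mathbf x) + \tilde A(\mathbf x) \le u_A(\sigma \mathbf x) $, i.e. $ A + u_A - u_A \circ \sigma \le \beta_A $.

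The crux, and the step I expect to be the main obstacle, is showing that the defining supremum is finite, in fact uniformly bounded in $ \mathbf x $; this is exactly where primitivity, local H\"older continuity and the hypothesis $ \inf A|_{\bigcup_{i \in \mathbb F}[i]} > -\infty $ all enter. Given $ \mathbf y = (y_0, \dots, y_{n-1}, x_0, x_1, \dots) $ with $ \sigma^n(\mathbf y) = \mathbf x $, I would close the orbit segment into a genuine periodic point $ \mathbf p = \overline{y_0 \cdots y_{n-1} \ell_1 \cdots \ell_{K_0}} $ of period $ n + K_0 $, where $ \ell_1, \dots, \ell_{K_0} \in \mathbb F $ is the bridge from $ y_{n-1} $ to $ y_0 $ supplied by primitivity (every symbol occurring in a point of $ \boldsymbol\Sigma $ lies in $ \bigcap_{n \ge 0} \mathcal B_n $, so the definition applies, and $ K_0 $ is uniform). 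Since $ \frac{1}{n+K_0} S_{n+K_0} A(\mathbf p) \le \beta_A $, one gets $ S_{n+K_0} \tilde A(\mathbf p) \le 0 $. Comparing $ S_n \tilde A(\mathbf y) $ with $ S_n \tilde A(\mathbf p) $ term by term, the $ j $-th iterates agree on their first $ n - j $ symbols, so the bound $ \text{Var}_{n-j}(A) \le H_A \lambda^{n-j} $ keeps the total discrepancy at most $ H_A \lambda / (1 - \lambda) $, while the remaining $ K_0 $ iterates of $ \mathbf p $ sit in cylinders indexed by $ \mathbb F $, where $ \tilde A \ge \inf A|_{\bigcup_{i \in \mathbb F}[i]} - \beta_A > -\infty $. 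Combining these gives $ S_n \tilde A(\mathbf y) \le K_0 \big( \beta_A - \inf A|_{\bigcup_{i \in \mathbb F}[i]} \big) + H_A \lambda / (1 - \lambda) $, a bound independent of $ \mathbf x $, $ n $ and $ \mathbf y $; hence $ u_A $ is bounded.

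Local H\"older continuity follows from the same comparison device, now holding the prefix $ y_0 \cdots y_{n-1} $ fixed: if $ d(\mathbf x, \mathbf x') \le \lambda^k $, replacing the tail $ \mathbf x $ by $ \mathbf x' $ in a near-optimal $ \mathbf y $ produces an admissible competitor for $ \mathbf x' $ whose iterates agree with those for $ \mathbf x $ on at least $ n - j + k $ symbols, giving $ u_A(\mathbf x) - u_A(\mathbf x') \le \frac{H_A \lambda}{1 - \lambda} \lambda^k $ and, by symmetry, $ \text{Var}_k(u_A) \le \frac{H_A \lambda}{1 - \lambda} \lambda^k $. Finally, minimality and uniqueness are soft: if $ u \in C^0(\boldsymbol\Sigma, \mathbb R_+) $ is any nonnegative sub-action, then telescoping $ \tilde A(\sigma^j \mathbf y) \le u(\sigma^{j+1} \mathbf y) - u(\sigma^j \mathbf y) $ along $ \sigma^n(\mathbf y) = \mathbf x $ yields $ S_n \tilde A(\mathbf y) \le u(\mathbf x) - u(\mathbf y) \le u(\mathbf x) $, and taking suprema gives $ u_A \le u $; since $ u_A $ is itself a nonnegative sub-action, it is the unique minimal one.
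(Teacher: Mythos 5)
Your proposal is correct and follows essentially the same route as the paper: the identical Ma\~n\'e-type supremum definition of $u_A$ (with the $n=0$ term forcing nonnegativity and the concatenation identity giving the sub-action inequality), boundedness via closing orbit segments into periodic orbits using primitivity together with local H\"older control and the bound $\inf A|_{\bigcup_{i \in \mathbb F}[i]} > -\infty$ on the bridge symbols, the same prefix-fixing/tail-swapping argument for $\text{Var}_k(u_A) \le \frac{H_A}{1-\lambda}\lambda^k$, and the same telescoping argument for minimality. The only (immaterial) difference is that you append a $K_0$-bridge after the segment $y_0 \cdots y_{n-1}$ to produce a periodic point of period $n + K_0$, whereas the paper replaces the last $K_0$ symbols of the segment to get one of period exactly $n$; both devices yield a uniform bound of the same nature.
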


\begin{proof}
Given $ \mathbf x \in \boldsymbol{\Sigma} $, define
$$ u_A(\mathbf x) := \sup \left\{ S_{k}(A - \beta_A)(\mathbf y) : k \geq 0, \; \mathbf y \in \boldsymbol{\Sigma}, \; \sigma^{k}(\mathbf y) = \mathbf x \right\}. $$

As $ S_0 (A - \beta_A) = 0 $ by convention, obviously $ u_A \ge 0 $. 

Take an integer $ k > K_0 $ and a point $ \mathbf y \in \boldsymbol{\Sigma} $ verifying $ \sigma^{k}(\mathbf y) = \mathbf x $. 
We can thus find a periodic point $ \mathbf y^k = (y_0^k, y_1^k, \ldots ) \in \boldsymbol{\Sigma} $ of period $ k $,
with $ y_j^k \in \mathbb F $ when $ j \in \{k - K_0, \ldots, k - 1\} $, such that $ d(\mathbf y, \mathbf y^k) \le \lambda^{k - K_0} $. 
First notice that
$$ S_k A(\mathbf y) - S_k A(\mathbf y^k) \leq \text{Var}_{k - K_0} (A) + \ldots + \text{Var}_1(A) + 
K_0 \left( \sup A - \inf A|_{\bigcup_{i \in \mathbb F} [i]} \right). $$
Since clearly $ S_k A (\mathbf y^k) \leq k \beta_A $, we then obtain
$$ S_k (A - \beta_A)(\mathbf y) \le \text{Var}(A) + K_0 \left( \sup A - \inf A|_{\bigcup_{i \in \mathbb F} [i]} \right), \quad \forall \; k > K_0, $$
which assures that
\begin{equation}\label{boundness uA}
0 \le u_A(\mathbf x) \le \max \left\{ \text{Var}(A) + K_0 \left( \sup A - \inf A|_{\bigcup_{i \in \mathbb F} [i]} \right), \; K_0 (\sup A - \beta_A) \right\}. 
\end{equation}

So $ u_A : \boldsymbol{\Sigma} \to \mathbb R_+ $ is a well defined bounded function. 
Moreover, from the identity $ A\circ\sigma^k + S_k (A - \beta_A) = S_{k + 1} (A - \beta_A) + \beta_A $ and the definition of $ u_A $, we get 
$ A + u_A \le u_A\circ\sigma + \beta_A $.

Concerning its regularity, $ u_A $ is a locally H\"older continuous function. Indeed, let
$ \mathbf x = (x_0, x_1, \ldots), \bar{\mathbf x} = (\bar x_0, \bar x_1, \ldots) \in \boldsymbol{\Sigma} $ be arbitrary points with
$ d(\mathbf x, \bar{\mathbf x}) \le \lambda^k $ for some $ k \geq 1 $. Given $ \epsilon > 0 $, take an integer $ \bar k \ge 0 $ and a point 
$ \bar{\mathbf y} = (\bar y_0, \bar y_1, \ldots) \in \boldsymbol{\Sigma} $, with $ \sigma^{\bar k}(\bar{\mathbf y}) = \bar{\mathbf x} $, such that
$$ u_A(\bar{\mathbf x}) - \epsilon < S_{\bar k} (A - \beta_A)(\bar{\mathbf y}). $$
Consider the point $ \mathbf y = (\bar y_0, \bar y_1, \ldots, \bar y_{\bar k - 1}, x_0, x_1, \ldots) \in \boldsymbol{\Sigma} $ satisfying 
$ \sigma^{\bar k}(\mathbf y) = \mathbf x $. So we have
\begin{eqnarray*} 
u_A(\bar{\mathbf x}) - u_A(\mathbf x) - \epsilon & < & 
S_{\bar k} A(\bar{\mathbf y}) - S_{\bar k} A(\mathbf y) \\
& \le & 
\text{Var}_{k + \bar k} (A) + \text{Var}_{k + \bar k - 1} (A) + \ldots + \text{Var}_{k}(A) \\
& \le &
H_A \left( \lambda^{k + \bar k} + \lambda^{k + \bar k - 1} + \ldots + \lambda^k \right) \\
& \le &
\frac{H_A}{1 - \lambda} \lambda^k.
\end{eqnarray*}
Since $ \epsilon $ can be considered arbitrarily small, this shows that 
$$ \text{Var}_k (u_A) \le \frac{H_A}{1 - \lambda} \lambda^k, $$
which means $ u_A $ is locally H\"older continuous (with constant $ H_{u_A} = \frac{H_A}{1-\lambda} $). 

Suppose now that $ u \in C^0(\boldsymbol{\Sigma}, \mathbb R_+) $ is a nonnegative sub-action for the potential $ A $.
Given $ \mathbf x \in \boldsymbol{\Sigma} $, if the point $ \mathbf y \in \boldsymbol{\Sigma}  $ satisfies
$ \sigma^k(\mathbf y) =\mathbf x $ for some $ k \ge 0 $, it is easy to see that $ u(\mathbf x) + k \beta_A \ge S_k A(\mathbf y) + u(\mathbf y) \ge S_k A(\mathbf y) $.
This proves that $ u(\mathbf x) \ge u_A(\mathbf x) $.
\end{proof}

\begin{remark}\label{sub-action and summable variantions}
If we keep the previous hypotheses when consindering a potential $ A \in C^0 (\boldsymbol{\Sigma}) $ with summable variations, we still
obtain a minimal, non-negative and bounded sub-action $ u_A: \boldsymbol{\Sigma} \to \mathbb R_+ $. Nevertheless, from
$$ \text{Var}_k (u_A) \le \sum_{j \ge k} \text{Var}_j (A), $$
we only assure its uniform continuity.
\end{remark}

It is important to notice that the existence of a sub-action as above indicates where we shall look for maximizing probabilities in the coercive case. 

\begin{proposition}\label{sub-action and finite alphabet}
Let $ (\boldsymbol{\Sigma}, \sigma) $ be a primitive Markov subshift on a countable alphabet. Suppose $ u \in C^0 (\boldsymbol{\Sigma}) $
is a bounded sub-action for a bounded above and coercive  potential $ A \in C^0 (\boldsymbol{\Sigma}) $. If $ \mu \in \mathcal M_\sigma $ is an
$A$-maximizing probability, then $ \mu $ is supported in a Markov subshift on a finite alphabet.
\end{proposition}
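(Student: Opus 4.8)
The plan is to use the sub-action to force the maximizing measure $ \mu $ to live exactly where the sub-action inequality is saturated, and then to convert coerciveness into a uniform bound on the symbols that can appear in $\mu$-typical points.

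First I would integrate the defining inequality $ A + u - u\circ\sigma \le \beta_A $ with respect to $ \mu $. Because $ u $ is bounded, both $ u $ and $ u\circ\sigma $ are $\mu$-integrable, and $\sigma$-invariance gives $ \int u\, d\mu = \int u\circ\sigma\, d\mu $, so the coboundary cancels and $ \int A\, d\mu \le \beta_A $. Since $ \mu $ is $A$-maximizing this is in fact an equality, hence the nonnegative function $ \beta_A - (A + u - u\circ\sigma) $ integrates to zero and therefore vanishes $\mu$-almost everywhere. In other words,
$$ A = \beta_A + (u\circ\sigma - u) \qquad \mu\text{-a.e.} $$

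Next I would extract from this a uniform lower bound. Setting $ \text{osc}(u) = \sup u - \inf u < \infty $, the identity above yields $ A \ge \beta_A - \text{osc}(u) $ at $\mu$-a.e. point, so that $ \mu\bigl(\{A < \beta_A - \text{osc}(u)\}\bigr) = 0 $. Coerciveness, $ \lim_{i \to +\infty} \sup A|_{[i]} = -\infty $, then furnishes an integer $ I $ with $ \sup A|_{[i]} < \beta_A - \text{osc}(u) $ for every $ i > I $; each such cylinder $ [i] $ is contained in the $\mu$-null set above, whence $ \mu([i]) = 0 $ for all $ i > I $.

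Finally I would propagate this to every coordinate using invariance: for each $ n \ge 0 $ and $ i > I $ we have $ \mu(\{x_n = i\}) = \mu(\sigma^{-n}[i]) = \mu([i]) = 0 $, so the countable union $ \{\mathbf x : x_n > I \text{ for some } n \ge 0\} $ is $\mu$-null. Thus $ \mu $ is concentrated on $ \{\mathbf x : x_j \le I \text{ for all } j\} = \Sigma_I $, a Markov subshift on the finite alphabet $ \{0, \ldots, I\} \cap \bigcap_{n \ge 0} \mathcal B_n $, and since this set is closed we conclude $ \text{supp}\,\mu \subseteq \Sigma_I $. I expect the only point demanding care to be the first step: one must know that $ \int A\, d\mu = \beta_A $ is a genuine real number — which holds because periodic orbits (present by primitivity) supply invariant measures with finite average, forcing $ \beta_A \in \mathbb R $ — in order to legitimately cancel the bounded coboundary and deduce the $\mu$-a.e. equality. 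Everything afterward is a direct consequence of coerciveness and shift-invariance.
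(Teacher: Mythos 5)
Your proof is correct and follows essentially the same route as the paper's: integrate the sub-action inequality against $\mu$ to obtain the saturation $A + u - u\circ\sigma = \beta_A$ on a set of full $\mu$-measure, use coerciveness together with the boundedness of $u$ to force $\mu([i]) = 0$ for all large $i$, and then use invariance to conclude $\text{supp}(\mu) \subseteq \Sigma_I$. The only cosmetic difference is that the paper phrases the argument via the closed, $\sigma$-invariant set $\text{supp}(\mu) \subseteq (A + u - u\circ\sigma - \beta_A)^{-1}(0)$ and the identity $\bigcap_{k \ge 0} \sigma^{-k}\bigl(\bigcup_{i \le I}[i]\bigr) = \Sigma_I$, whereas you argue measure-theoretically with $\mu$-null sets and a countable union over coordinates.
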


\begin{proof}
Let $ \mu \in \mathcal M_\sigma $ be an $A$-maximizing probability. Since $ u \in C^0 (\boldsymbol{\Sigma}) $ is a sub-action for the potential
$ A $, we have
$$ A + u - u \circ \sigma - \beta_A \le 0 \;\; \text{ and } \; \int (A + u - u \circ \sigma - \beta_A) \; d \mu = 0. $$
Therefore, the support of $ \mu $ is a subset of the closed set $ (A + u - u \circ \sigma - \beta_A)^{-1} (0) $.

Let $ \eta > 0 $ be a real constant. As $ A $ is coercive and $ u $ is bounded, there exists $ \hat I \in \mathbb Z_+ $ such that
\begin{equation}\label{coerciveness first time} 
\sup (A + u - u \circ \sigma - \beta_A)|_{\bigcup_{i > \hat I} [i]} < -\eta. 
\end{equation}
In particular, we obtain $ \mu (\bigcup_{i > \hat I} [i]) = 0 $, or in a more useful way $ \text{supp}(\mu) \subseteq \bigcup_{i \le \hat I} [i] $.

Being $ \text{supp}(\mu) $ a $\sigma$-invariant set, we get 
$ \text{supp}(\mu) \subseteq \bigcap_{k \ge 0} \sigma^{-k} \left(\bigcup_{i \le \hat I} [i] \right) = \Sigma_{\hat I} $,
which ends the proof.
\end{proof}

\begin{remark}\label{melhoria}
In \cite{JMU1}, when considering a primitive subshift on a countable alphabet, the authors showed there exist
invariant probabilities that maximize the integral of a bounded above and coercive potential $ A $ with summable variations and satisfying 
$ \inf A|_{\bigcup_{i \in \mathbb F} [i]} > - \infty $. They characterized them by the fact that their support lies in a compact subset of 
$ \boldsymbol{\Sigma} $. Remark~\ref{sub-action and summable variantions} and proposition~\ref{sub-action and finite alphabet} go beyond 
guaranteeing that those $A$-maximizing probabilities are actually supported in a Markov subshift on a finite alphabet.  
\end{remark}

\end{subsection}

\begin{subsection}*{Results for compact approximations}

In the context of a transitive expanding transformation defined on a compact metric space, the theory of ergodic optimization
has received special attention, which has yielded a more detailed theorical picture when the potential is sufficiently regular
as, let us say, Lipschitz continuous (see, for instance, \cite{CG, CLT, GL, Jenkinson}).
To demonstrate theorem~\ref{existence maximizing}, we will take advantage of results concerning ergodic optimal problems 
for the compact approximations $ (\Sigma_I, \sigma) $.

We suppose henceforth that $ (\boldsymbol{\Sigma}, \sigma) $ is a finitely primitive and
$ A: \boldsymbol{\Sigma} \to \mathbb R $ is a bounded above and locally H\"older continuous potential.
Recall (from footnote~\ref{obviedade}) that in this case $ \inf A|_{\bigcup_{i \in \mathbb F} [i]} > - \infty $. 

For $ I \ge I_{\mathbb F} $, we will need to consider the following ergodic constants
$$ \beta_A (I): = \max_{\substack{\mu \in \mathcal M_\sigma \\ \text{supp} \mu \subseteq \Sigma_I}} \int_{\Sigma_I} A \; d \mu. $$
Each one corresponds to the ergodic maximizing value associated to the Lipschitz continuous potential 
$ A|_{\Sigma_I} $ defined on the compact metric space $ \Sigma_I $. Recall $ \Sigma_I $ is the Markov 
subshift on the finite alphabet $ \{\iota_1, \ldots, \iota_{r_I}\} := \{0, \ldots, I\} \cap \left( \bigcap_{n \ge 0} \mathcal B_n \right) $ 
associated to the transition matrix $ \mathbf M|_{\{0, \ldots, I\} \times \{0, \ldots, I\}} $. If $ I \ge I_{\mathbb F} $, then
obviously $ \mathbb F \subset \{\iota_1, \ldots, \iota_{r_I}\} $ and $ (\Sigma_I, \sigma) $ is a topologically mixing dynamical system.

Remember that, in ergodic optimization on compact spaces, we call \emph{sub-action} for the potential $ A|_{\Sigma_I} $ any function $ u \in C^0(\Sigma_I) $ satisfying, for each point $ \mathbf x \in \Sigma_I $, $ A(\mathbf x) + u(\mathbf x) - u \circ \sigma (\mathbf x) \le \beta_A(I) $. Besides, a sub-action
$ u \in C^0(\Sigma_I) $ is said to be \emph{calibrated} when, for every $ \mathbf x \in \Sigma_I $, one can find a point $ \bar{\mathbf x} \in \Sigma_I $,
with $ \sigma(\bar{\mathbf x}) = \mathbf x $, such that
$$ A(\bar{\mathbf x}) + u(\bar{\mathbf x}) - u (\mathbf x) = \beta_A(I). $$
Main properties of calibrated sub-actions are discussed, for instance, in \cite{CLT, GL, Jenkinson}.

\begin{lemma}\label{oscillation calibrated}
Assume $ (\boldsymbol{\Sigma}, \sigma) $ is a finitely primitive Markov subshift on a countable alphabet.
Let $ A: \boldsymbol{\Sigma} \to \mathbb R $ be a bounded above and locally H\"older continuous potential. 
Consider an integer $ I \ge I_{\mathbb F} $.
If $ u \in C^0(\Sigma_I) $ is a calibrated sub-action for the Lipschitz continuous potential $ A |_{\Sigma_I} $, then 
$$ \text{osc}(u) := \max_{\mathbf x, \mathbf y \in \Sigma_I} [u(\mathbf x) - u(\mathbf y)] \le 
\text{Var}(A) + K_0 \left( \sup A - \inf_{i \in \mathbb F} A|_{[i]} \right). $$
\end{lemma}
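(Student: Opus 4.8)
The plan is to compare the values of $u$ at a point where it attains its maximum and a point where it attains its minimum, exploiting calibration on the maximizing side and the plain sub-action inequality on the minimizing side, with primitivity used to bridge the two. Since $\Sigma_I$ is compact and $u$ is continuous, I would fix $\mathbf x^*, \mathbf y^* \in \Sigma_I$ with $u(\mathbf x^*) = \max u$ and $u(\mathbf y^*) = \min u$, so that $\text{osc}(u) = u(\mathbf x^*) - u(\mathbf y^*)$.

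First I would iterate the calibration property at $\mathbf x^*$: for each $N \ge 1$ it produces a point $\mathbf x^{*(N)} = (a_0, \ldots, a_{N-1}, x_0^*, x_1^*, \ldots) \in \Sigma_I$ with $\sigma^N(\mathbf x^{*(N)}) = \mathbf x^*$ and the exact identity $u(\mathbf x^*) - u(\mathbf x^{*(N)}) = S_N(A - \beta_A(I))(\mathbf x^{*(N)})$. The decisive construction is then to glue the prehistory $a_0 \cdots a_{N-1}$ of this calibrated preimage to $\mathbf y^*$ through a connecting block in $\mathbb F$: using finite primitivity of $(\Sigma_I, \sigma)$ (valid since $\mathbb F \subset \{\iota_1, \ldots, \iota_{r_I}\}$ when $I \ge I_{\mathbb F}$), I would choose $\ell_1, \ldots, \ell_{K_0} \in \mathbb F$ admissibly joining $a_{N-1}$ to $y_0^*$ and set $\mathbf v = (a_0, \ldots, a_{N-1}, \ell_1, \ldots, \ell_{K_0}, y_0^*, y_1^*, \ldots) \in \Sigma_I$. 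By construction $\sigma^{N + K_0}(\mathbf v) = \mathbf y^*$, and $\mathbf v$ coincides with $\mathbf x^{*(N)}$ on its first $N$ symbols.

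Next I would estimate $S_{N+K_0} A(\mathbf v)$ from below in two blocks. On the first $N$ iterates, $\sigma^j \mathbf v$ and $\sigma^j \mathbf x^{*(N)}$ agree on their initial $N - j$ coordinates, so local H\"older continuity yields $\sum_{j=0}^{N-1} A(\sigma^j \mathbf v) \ge S_N A(\mathbf x^{*(N)}) - \text{Var}(A)$; on the remaining $K_0$ iterates the leading symbol lies in $\mathbb F$, whence $\sum_{j=N}^{N + K_0 - 1} A(\sigma^j \mathbf v) \ge K_0 \inf_{i \in \mathbb F} A|_{[i]}$. Applying the sub-action inequality along the orbit from $\mathbf v$ to $\mathbf y^*$ gives $u(\mathbf y^*) \ge S_{N+K_0}(A - \beta_A(I))(\mathbf v) + u(\mathbf v)$, while uniform continuity of $u$ on the compact set $\Sigma_I$ provides $u(\mathbf v) \ge u(\mathbf x^{*(N)}) - \epsilon_N$ with $\epsilon_N \to 0$ as $N \to \infty$.

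The heart of the argument is the cancellation that occurs when these pieces are assembled. Substituting the calibration identity $S_N A(\mathbf x^{*(N)}) = u(\mathbf x^*) - u(\mathbf x^{*(N)}) + N\beta_A(I)$ into the lower bound for $S_{N+K_0} A(\mathbf v)$, the term $u(\mathbf x^{*(N)})$ entering through $u(\mathbf v)$ exactly cancels the one carried by the calibrated prehistory, leaving $\min u \ge \max u - \text{Var}(A) - K_0\bigl(\beta_A(I) - \inf_{i \in \mathbb F} A|_{[i]}\bigr) - \epsilon_N$. Letting $N \to \infty$ and using $\beta_A(I) \le \sup A$ then yields the claimed bound. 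The main obstacle is precisely engineering this cancellation, that is, realizing that the head of $\mathbf v$ must be the \emph{calibrated} prehistory of $\mathbf x^*$ rather than $\mathbf x^*$ itself; once that choice is made, the remaining work is the routine variation bookkeeping and the verification that all glued words are admissible inside $\Sigma_I$.
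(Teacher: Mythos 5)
Your proof is correct and takes essentially the same approach as the paper's: an exact calibrated backward orbit from one point, glued through a length-$K_0$ connecting word in $\mathbb F$ to a backward orbit ending at the other point, with the sub-action inequality, the variation bound $\sum_k \text{Var}_k(A) \le \text{Var}(A)$, and continuity of $u$ at ever-closer points closing the estimate. The only cosmetic differences are that you append the connecting word after the full calibrated prehistory (producing a preimage of order $N+K_0$) whereas the paper keeps the two backward orbits synchronized at the same length $k$, and that your bookkeeping even yields the marginally sharper constant $K_0\left(\beta_A(I) - \inf A|_{\bigcup_{i \in \mathbb F}[i]}\right)$ before relaxing $\beta_A(I)$ to $\sup A$.
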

  
\begin{proof}
Take arbitrary points $ \mathbf x, \mathbf y \in \Sigma_I $.
As $ u $ is a calibrated sub-action, we define inductively a sequence $ \{\mathbf x^k = (x^k_0, x^k_1, \ldots) \} \subseteq \Sigma_I $ 
by choosing $ \mathbf x^0 := \mathbf x $ and, for all $ k \ge 0 $, demanding $ \sigma(\mathbf x^{k + 1}) = \mathbf x^k $ with
$ u(\mathbf x^k) = u(\mathbf x^{k +1}) + A(\mathbf x^{k + 1}) - \beta_A(I) $.

Write  $ \mathbf y^0 := \mathbf y = (y_0, y_1, \ldots) $. 
Since$ (\boldsymbol{\Sigma}, \sigma) $ is finitely primitive and $ I \ge I_{\mathbb F} $, there exists a word 
$ (w_1, w_2, \ldots, w_{K_0}) \in \mathbb F^{K_0} $, with $ \mathbf M(w_j, w_{j + 1}) = 1 $, such that 
$ \mathbf M(x_0^{K_0 + 1}, w_1) = 1 = \mathbf M(w_{K_0}, y_0) $.
So we may consider the point $ \mathbf y^k \in \Sigma_I $ defined by 
$$ \mathbf y^k =
\left\{ \begin{array}{ll} (w_{K_0 - k + 1}, \ldots, w_{K_0}, y_0, y_1, \ldots) & \mbox{if $ 1 \le k \le K_0 $} \\ 
(x_0^k, \ldots, x_0^{K_0 + 1}, w_1, w_2, \ldots, w_{K_0}, y_0, y_1, \ldots) & \mbox{if $ k > K_0 $} \end{array} \right.. $$
Clearly, $ \sigma(\mathbf y^{k + 1}) = \mathbf y^k $ and
$ u(\mathbf y^k) \ge u(\mathbf y^{k +1}) + A(\mathbf y^{k + 1}) - \beta_A(I) $.

Then notice that
\begin{eqnarray*}
u(\mathbf x) - u(\mathbf y) & \le & u(\mathbf x^1) - u(\mathbf y^1) + A(\mathbf x^1) - A(\mathbf y^1) \\
& \le & u(\mathbf x^2) - u(\mathbf y^2) + A(\mathbf x^1) - A(\mathbf y^1) + A(\mathbf x^2) - A(\mathbf y^2) \\
& \vdots & \nonumber \\
& \le & u(\mathbf x^k) - u(\mathbf y^k) + \sum_{j = 1}^k [A(\mathbf x^j) - A(\mathbf y^j)]. 
\end{eqnarray*}

As $ d(\mathbf x^k, \mathbf y^k) = \lambda^{k - K_0 - 1} d(\mathbf x^{K_0 + 1}, \mathbf y^{K_0 + 1}) $ for $ k > K_0 $, the continuity of $ u $ 
implies $ \lim_{k \to \infty} [u(\mathbf y^k) - u(\mathbf x^k)] = 0 $. Hence, we obtain
\begin{eqnarray*}
u(\mathbf y) - u(\mathbf x) 
& \le & \sum_{j = 1}^\infty [A(\mathbf x^j) - A(\mathbf y^j)] \\
& = & \sum_{j = 1}^{K_0} [A(\mathbf x^j) - A(\mathbf y^j)] + \sum_{j = K_0 + 1}^\infty [A(\mathbf x^j) - A(\mathbf y^j)] \\
& \le & K_0 \left( \sup A - \inf A|_{\bigcup_{i \in \mathbb F} [i]} \right) + \text{Var}(A),
\end{eqnarray*}
from which the statement follows immediately.
\end{proof}

It is necessary to recall other central notions and facts of ergodic optimization on compact spaces.
A point $ \mathbf x \in \Sigma_I $ is said to be non-wandering with respect to the Lipschitz continuous
potential $ A|_{\Sigma_I} $ if, for all $ \epsilon > 0 $, one can find a point $ \mathbf y \in \Sigma_I $
and an integer $ n > 0 $ such that
$$ d(\mathbf x, \mathbf y) < \epsilon, \; d(\mathbf x, \sigma^n(\mathbf y)) < \epsilon \; \text{ and } \; | S_n (A - \beta_A(I))(\mathbf y) | < \epsilon. $$
Let $ \Omega(A, I) \subseteq \Sigma_I $ denote the set of non-wandering points with respect to $ A|_{\Sigma_I} $.

This set is a compact $\sigma$-invariant subset of $ \Sigma_I $. For any sub-action $ u \in C^0(\Sigma_I) $, 
\begin{equation}\label{Aubry and contact}
\Omega(A, I) \subseteq \{\mathbf x \in \Sigma_I : (A + u - u \circ \sigma - \beta_A(I))(\mathbf x) = 0 \}. 
\end{equation}
Furthermore, $ \Omega(A, I) $ characterizes the maximizing probabilities in the sense that, for $ \mu \in \mathcal M_\sigma $ 
with $ \text{supp}(\mu) \subseteq \Sigma_I $, one has
\begin{equation}\label{Mather and Aubry}
\int_{\Sigma_I} A \; d\mu = \beta_A(I) \; \Leftrightarrow \; \text{supp}(\mu) \subseteq \Omega(A, I). 
\end{equation}

The demonstrations of these properties and more details on the non-wandering set with respect to a Lipschitz continuous potential
may be found, for instance, in \cite{CLT, GL, GLT, LT}.

Since $ (\Sigma_{I_{\mathbb F}}, \sigma) $ is a topologically mixing dynamical system, we may consider a probability
measure $ \mu_{\mathbb F} \in \mathcal M_\sigma $ whose support is a periodic orbit in $ \Sigma_{I_{\mathbb F}} \cap {\mathbb F}^{\mathbb Z_+} $. 
In particular, for all $ I \ge I_{\mathbb F} $, notice that
\begin{equation}\label{lower bound muF}
\beta_A(I) \ge \int_{\Sigma_I} A \; d\mu_{\mathbb F} \ge \inf A|_{\bigcup_{i \in \mathbb F} [i]}. 
\end{equation}

Let us assume in addition that the potential $ A : \boldsymbol{\Sigma} \to \mathbb R $ is coercive. 
A fundamental inequality is thus the following one.

\begin{notation}
The coerciveness of the potential allows us to determine an integer $ \hat I > I_{\mathbb F} $ satisfying
\begin{equation}\label{definition Ihat}
\sup A|_{\bigcup_{i > \hat I} [i]} < \inf A|_{\bigcup_{i \in \mathbb F} [i]} - \left[ \text{Var}(A) + K_0 \left( \sup A - \inf A|_{\bigcup_{i \in \mathbb F} [i]} \right) \right].
\end{equation}
\end{notation}

So we have an important lemma.

\begin{lemma}\label{fixed beta}
Suppose $ (\boldsymbol{\Sigma}, \sigma) $ is a finitely primitive Markov subshift on a countable alphabet.
Let $ A: \boldsymbol{\Sigma} \to \mathbb R $ be a bounded above, coercive and locally H\"older continuous potential. Then
$$ \beta_A(I) = \beta_A(\hat I) \;\;\; \forall \; I \ge \hat I, $$
where the positive integer $ \hat I $ is defined by~(\ref{definition Ihat}).
Furthermore, given an integer $ I \ge \hat I $, only $(A|_{\Sigma_{\hat I}})$-maximizing probabilities 
maximize the integral of $ A|_{\Sigma_I} $ among $\sigma$-invariant probabilities supported in $ \Sigma_I $.
\end{lemma}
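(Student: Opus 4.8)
The plan is to prove Lemma~\ref{fixed beta} by showing that any $\sigma$-invariant probability supported in $\Sigma_I$ (for $I \geq \hat I$) whose integral equals $\beta_A(I)$ must actually be supported in $\Sigma_{\hat I}$. The natural tool is a calibrated sub-action on the larger compact system $(\Sigma_I, \sigma)$ together with the oscillation bound from Lemma~\ref{oscillation calibrated} and the characterization~(\ref{Mather and Aubry}) via the non-wandering set $\Omega(A, I)$. The key observation is that the right-hand side of~(\ref{definition Ihat}) is engineered precisely so that the symbols $i > \hat I$ are too costly to appear in the support of a maximizing measure.

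First I would fix $I \geq \hat I$ and take a calibrated sub-action $u \in C^0(\Sigma_I)$ for $A|_{\Sigma_I}$, which exists by the standard compact theory. By Lemma~\ref{oscillation calibrated}, its oscillation satisfies
$$ \text{osc}(u) \leq \text{Var}(A) + K_0\left(\sup A - \inf A|_{\bigcup_{i \in \mathbb F}[i]}\right). $$
Next I would establish the lower bound $\beta_A(I) \geq \inf A|_{\bigcup_{i \in \mathbb F}[i]}$ from~(\ref{lower bound muF}). Now for any point $\mathbf x \in \Sigma_I$ lying in a cylinder $[i]$ with $i > \hat I$, I would estimate the contact quantity $(A + u - u\circ\sigma - \beta_A(I))(\mathbf x)$ from above: using $A(\mathbf x) \leq \sup A|_{\bigcup_{i > \hat I}[i]}$, the oscillation bound $u(\mathbf x) - u(\sigma(\mathbf x)) \leq \text{osc}(u)$, and the lower bound on $\beta_A(I)$, I would combine these with~(\ref{definition Ihat}) to conclude this quantity is strictly negative. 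By the inclusion~(\ref{Aubry and contact}), such points cannot belong to $\Omega(A, I)$, so $\Omega(A, I) \cap \bigcup_{i > \hat I}[i] = \emptyset$, and since $\Omega(A,I)$ is $\sigma$-invariant and compact, it is contained in $\Sigma_{\hat I}$.

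With $\Omega(A, I) \subseteq \Sigma_{\hat I}$ in hand, the characterization~(\ref{Mather and Aubry}) finishes both assertions. Any $(A|_{\Sigma_I})$-maximizing probability $\mu$ has $\text{supp}(\mu) \subseteq \Omega(A, I) \subseteq \Sigma_{\hat I}$, so $\mu$ is in fact supported in $\Sigma_{\hat I}$ and thus is $(A|_{\Sigma_{\hat I}})$-maximizing; conversely the inclusion $\Sigma_{\hat I} \subseteq \Sigma_I$ makes any $(A|_{\Sigma_{\hat I}})$-maximizing measure admissible for the larger problem. This immediately yields $\beta_A(I) = \beta_A(\hat I)$ for all $I \geq \hat I$, and simultaneously the final claim that only $(A|_{\Sigma_{\hat I}})$-maximizing probabilities attain $\beta_A(I)$ among measures supported in $\Sigma_I$.

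The main obstacle I anticipate is the contact-quantity estimate at symbols $i > \hat I$: one must carefully verify that the oscillation bound is taken on $\Sigma_I$ (not $\Sigma_{\hat I}$) yet is controlled by the \emph{same} constant appearing in~(\ref{definition Ihat}), which works precisely because Lemma~\ref{oscillation calibrated} gives a bound independent of $I$. A subtle point is that $\beta_A(I)$ itself appears in the contact term, so I must use the $I$-independent lower bound $\beta_A(I) \geq \inf A|_{\bigcup_{i \in \mathbb F}[i]}$ rather than any upper estimate; inserting the wrong inequality here would break the sign. Once the inequality in~(\ref{definition Ihat}) is matched term-by-term against $\sup A|_{[i]} + \text{osc}(u) - \beta_A(I)$, the strict negativity is automatic and the rest is a routine application of the compact ergodic-optimization dictionary.
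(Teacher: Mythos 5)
Your proposal is correct and follows essentially the same route as the paper: a calibrated sub-action on $\Sigma_I$, the $I$-independent oscillation bound of Lemma~\ref{oscillation calibrated}, the lower bound~(\ref{lower bound muF}), and inequality~(\ref{definition Ihat}) combine to show the contact quantity is strictly negative on $\bigcup_{i > \hat I}[i]$, which via~(\ref{Aubry and contact}) and~(\ref{Mather and Aubry}) forces maximizing measures into $\Sigma_{\hat I}$. The only cosmetic difference is that you phrase the exclusion directly (proving $\Omega(A,I) \subseteq \Sigma_{\hat I}$) whereas the paper argues by contradiction at a point of $\text{supp}(\mu) \setminus \Sigma_{\hat I}$; the inequalities and constants are identical.
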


\begin{proof}
Clearly $ \beta_A(\hat I) \le \beta_A(I) $ whenever $ I \ge \hat I $. In order to obtain the equality, 
it is enough to show that every $(A|_{\Sigma_I})$-maximizing probability is actually supported in $ \Sigma_{\hat I} $.

Suppose on the contrary the existence of a probability measure $ \mu \in \mathcal M_\sigma $, with $ \text{supp}(\mu) \subseteq \Sigma_I $ and
$ \int_{\Sigma_I} A \; d\mu = \beta_A(I) $, such that $ \text{supp}(\mu) - \Sigma_{\hat I} \neq \emptyset $.

Take then $ \mathbf x = (x_0, x_1, \ldots) \in \text{supp}(\mu) - \Sigma_{\hat I} $. We may assume $ x_0 > \hat I $. 
Therefore, from~(\ref{lower bound muF}) and~(\ref{definition Ihat}), it follows
\begin{eqnarray*}
A(\mathbf x) - \beta_A(I) & \le & \sup A|_{\bigcup_{i > \hat I} [i]} - \inf A|_{\bigcup_{i \in \mathbb F} [i]} \\
& < & - \left[ \text{Var}(A) + K_0 \left( \sup A - \inf A|_{\bigcup_{i \in \mathbb F} [i]} \right) \right].
\end{eqnarray*}

Let $ u \in C^0(\Sigma_I) $ be a calibrated sub-action for the Lipschitz continuous potential $ A|_{\Sigma_I} $.
Thanks to~(\ref{Mather and Aubry}) and~(\ref{Aubry and contact}), we have
$ A(\mathbf x) + u(\mathbf x) - u \circ \sigma(\mathbf x) - \beta_A(I) = 0 $,
which then yields
$$ u(\mathbf x) - u \circ \sigma(\mathbf x) > \text{Var}(A) + K_0 \left( \sup A - \inf A|_{\bigcup_{i \in \mathbb F} [i]} \right). $$
 
However, this inequality contradicts lemma~\ref{oscillation calibrated} which assures that
$$ \text{osc}(u) \le \text{Var}(A) + K_0 \left( \sup A - \inf A|_{\bigcup_{i \in \mathbb F} [i]} \right). $$

Hence, necessarily $ \text{supp}(\mu) \subseteq \Sigma_{\hat I} $ whenever $ \mu \in \mathcal M_\sigma $ maximizes
the integral of $ A|_{\Sigma_I} $ among the $\sigma$-invariant probabilities supported in $ \Sigma_I $.
\end{proof}

\end{subsection}

\begin{subsection}*{Proof of theorem~\ref{existence maximizing}}

Our strategy is to extend the statement of lemma~\ref{fixed beta} to the noncompact dynamical system $ (\boldsymbol{\Sigma}, \sigma) $.
More precisely, we will show that
\begin{equation}\label{central inequality}
\int A \; d\mu \le \beta_A(\hat I), \;\; \forall \; \mu \in \mathcal M_\sigma. 
\end{equation}

Clearly it will follow $ \beta_A = \beta_A(\hat I) $, guaranteeing the existence of maximizing probabilities.
Propositions~\ref{minimal sub-action} and~\ref{sub-action and finite alphabet} and lemma~\ref{fixed beta} will then assure that
only $A|_{\Sigma_{\hat I}}$-maximizing probabilities maximize the integral of the potential $ A $ among all $\sigma$-invariant Borel probability measures.
Besides, from~(\ref{Mather and Aubry}), the compact $\sigma$-invariant subset of $ \Sigma_{\hat I} $ in the statement of 
theorem~\ref{existence maximizing} will immediately be $ \Omega = \Omega(A, \hat I) $.

So we just need to demonstrate~(\ref{central inequality}). As a matter of fact, this inequality is a consequence of the denseness of probabilities
whose support is a pediodic orbit (see, for instance, \cite{Parthasarathy}) and lemma~\ref{fixed beta}. For the sake of completeness, we discuss its
proof carefully.

Notice first that, thanks to the ergodic decomposition theorem, it is enough to suppose $ \mu \in \mathcal M_\sigma $ ergodic.
It seems convenient to recall that as usual we are considering the space of bounded real-valued functions on $ \boldsymbol{\Sigma} $
and its subspaces equipped with the uniform norm. We take then a dense sequence $ \{f_j\}_{j \ge 0} $ of bounded uniformly continuous 
real-valued functions on $ \boldsymbol{\Sigma} $. Let $ \Lambda_j \subseteq \boldsymbol{\Sigma} $ denote the set of points for which the 
Birkhoff's ergodic theorem holds for $ f_j $ as a $\mu$-integrable function. Take then a point $ \mathbf z \in \bigcap_{j \ge 0} \Lambda_j $. 
It is not difficult to see that the sequence of Borel probability measures 
$$ \nu_k := \frac{1}{k} \sum_{j = 0}^{k - 1} \delta_{\sigma^j(\mathbf z)} $$
converges in the weak topology to $ \mu $.   

Since $ (\boldsymbol{\Sigma}, \sigma) $ is a finitely primitive Markov subshift, for every integer $ k > K_0 $, 
let $ \mathbf y^k = (y_0^k, y_1^k, \ldots) \in \boldsymbol{\Sigma} $ be a periodic point of period $ k $, 
with $ y_j^k \in \mathbb F $ whenever $ j \in \{k - K_0, \ldots, k - 1\} $, such that 
$ d(\mathbf z, \mathbf y^k) \le \lambda^{k - K_0} $. Consider then the $\sigma$-invariant Borel probability measure
$$ \mu_k := \frac{1}{k} \sum_{j = 0}^{k - 1} \delta_{\sigma^j(\mathbf y^k)} \in \mathcal M_\sigma. $$
Let $ f: \boldsymbol{\Sigma} \to \mathbb R $ be a bounded function dependending on $ n $ coordinates, that is, satisfying $ \text{Var}_n(f) = 0 $.
Notice that (supposing $ k > K_0 + n $)
$$ \left | \int f \; d\mu_k - \int f \; d\nu_k \right | = \frac{1}{k}\left|S_k f(\mathbf y^k) - S_k f(\mathbf z) \right| \le 
\frac{2}{k} (K_0 + n) \| f \|_{\infty} \rightarrow 0 \; \text{ as } \; k \rightarrow \infty. $$

As functions depending on finitely many coordinates are dense among bounded uniformly continuous real-valued 
functions on $ \boldsymbol{\Sigma} $, we conclude that the sequences $ \{\mu_k\} $ and $ \{\nu_k\} $ have the same weak limit $ \mu $. However,
lemma~\ref{fixed beta} assures that, for each index $ k $,
$$ \int A \; d\mu_k \le \beta_A(\hat I). $$
Thus, (\ref{central inequality}) follows just by passing to the limit.
\end{subsection}

\begin{subsection}*{A final remark}

Notice that, in reality, the coerciveness of the potential was exactly used twice in our arguments. Indeed, the coercive condition
was employed just to assure both inequalities~(\ref{coerciveness first time}) and~(\ref{definition Ihat}).

Nevertheless, during the construction of the sub-action $ u_A \in C^0(\boldsymbol{\Sigma}) $ in the proof of proposition~\ref{minimal sub-action},
its boundness was made explicit in~(\ref{boundness uA}). Therefore, one can easily adapted the demonstration of proposition~\ref{sub-action and finite alphabet}
using this information and the fact that $ \beta_A \ge \inf A |_{\bigcup_{i \in \mathbb F} [i]} $ in order to guarantee the following statement.

\begin{proposition}
Let $ (\boldsymbol{\Sigma}, \sigma) $ be a finitely primitive Markov subshift on a countable alphabet.
Assume $ A: \boldsymbol{\Sigma} \to \mathbb R $ is a bounded above and locally H\"older continuous potential. 
Suppose there exists an integer $ \hat I > I_{\mathbb F} $ such that
$$ \sup A|_{\bigcup_{i > \hat I} [i]} < \inf A|_{\bigcup_{i \in \mathbb F} [i]} - \left[ \text{Var}(A) + K_0 \left( \sup A - \inf A|_{\bigcup_{i \in \mathbb F} [i]} \right) \right]. $$ 
Then, $ \text{supp}(\mu) \subseteq \Sigma_{\hat I} $ whenever $ \mu \in \mathcal M_\sigma $ is an $A$-maximizing probability.
\end{proposition}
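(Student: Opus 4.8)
The plan is to repeat the argument of proposition~\ref{sub-action and finite alphabet} almost verbatim, the only modification being that, in the absence of coerciveness, one can no longer extract the strict estimate~(\ref{coerciveness first time}) from the mere boundedness of an arbitrary sub-action. Instead, I would work with the specific minimal sub-action $ u_A \in C^0(\boldsymbol{\Sigma}) $ furnished by proposition~\ref{minimal sub-action}, whose explicit upper bound~(\ref{boundness uA}) matches precisely the constants appearing in the hypothesis of the proposition (which is nothing but~(\ref{definition Ihat}) taken as an assumption rather than deduced from coerciveness).

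First I would record the elementary lower bound $ \beta_A \ge \inf A|_{\bigcup_{i \in \mathbb F} [i]} $. This follows at once from the measure $ \mu_{\mathbb F} \in \mathcal M_\sigma $ supported on a periodic orbit inside $ \Sigma_{I_{\mathbb F}} \cap \mathbb F^{\mathbb Z_+} $, exactly as in~(\ref{lower bound muF}), since $ \beta_A \ge \int A \, d\mu_{\mathbb F} \ge \inf A|_{\bigcup_{i \in \mathbb F} [i]} $. Consequently $ K_0(\sup A - \beta_A) \le K_0(\sup A - \inf A|_{\bigcup_{i \in \mathbb F} [i]}) $, so the maximum in~(\ref{boundness uA}) is realized by its first term and the bound on the sub-action simplifies to
$$ 0 \le u_A \le \text{Var}(A) + K_0\left(\sup A - \inf A|_{\bigcup_{i \in \mathbb F} [i]}\right). $$

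Next, let $ \mu \in \mathcal M_\sigma $ be any $A$-maximizing probability. As in proposition~\ref{sub-action and finite alphabet}, since $ u_A $ is a sub-action one has $ A + u_A - u_A \circ \sigma - \beta_A \le 0 $ together with $ \int (A + u_A - u_A \circ \sigma - \beta_A) \, d\mu = 0 $, whence $ \text{supp}(\mu) \subseteq (A + u_A - u_A \circ \sigma - \beta_A)^{-1}(0) $. It then suffices to show that this contact set avoids $ \bigcup_{i > \hat I} [i] $. For $ \mathbf x \in \bigcup_{i > \hat I} [i] $, using $ u_A \circ \sigma \ge 0 $, the bound above, and $ \beta_A \ge \inf A|_{\bigcup_{i \in \mathbb F} [i]} $, I would estimate
$$ (A + u_A - u_A \circ \sigma - \beta_A)(\mathbf x) \le \sup A|_{\bigcup_{i > \hat I} [i]} + \text{Var}(A) + K_0\left(\sup A - \inf A|_{\bigcup_{i \in \mathbb F} [i]}\right) - \inf A|_{\bigcup_{i \in \mathbb F} [i]}, $$
which is strictly negative precisely by the hypothesis on $ \hat I $. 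Hence $ \mu(\bigcup_{i > \hat I} [i]) = 0 $, so $ \text{supp}(\mu) \subseteq \bigcup_{i \le \hat I} [i] $, and $\sigma$-invariance of the support yields $ \text{supp}(\mu) \subseteq \bigcap_{k \ge 0} \sigma^{-k}(\bigcup_{i \le \hat I} [i]) = \Sigma_{\hat I} $, as claimed.

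I do not expect a serious obstacle: the argument is a routine adaptation of proposition~\ref{sub-action and finite alphabet}. The only point that requires genuine care is to resist using an \emph{arbitrary} bounded sub-action, whose bound is uncontrolled, and to invoke instead the minimal sub-action $ u_A $, so that the explicit constants in~(\ref{boundness uA}) line up with the constants in the stated gap condition. The lower bound $ \beta_A \ge \inf A|_{\bigcup_{i \in \mathbb F} [i]} $ plays a double role here — it is what makes the first term of the maximum in~(\ref{boundness uA}) dominant and, simultaneously, what closes the final strict estimate.
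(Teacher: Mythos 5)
Your proof is correct and follows precisely the route the paper itself indicates for this proposition: adapt the argument of proposition~\ref{sub-action and finite alphabet}, replacing the coerciveness estimate~(\ref{coerciveness first time}) by the explicit bound~(\ref{boundness uA}) on the minimal sub-action $u_A$ from proposition~\ref{minimal sub-action}, combined with the lower bound $\beta_A \ge \inf A|_{\bigcup_{i \in \mathbb F} [i]}$. You have simply written out in full the details that the paper leaves as an easy adaptation, and they check out.
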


Since lemma~\ref{fixed beta} is actually a consequence of inequality~(\ref{definition Ihat}) and not of the coerciveness of the potential,
one may now obtain a more general version of theorem~\ref{existence maximizing}, without necessarily imposing an asymptotic behavior to $ \sup A |_{[i]} $.
In fact, we have the following result.

\begin{theorem}
Suppose $ (\boldsymbol{\Sigma}, \sigma) $ is a finitely primitive Markov subshift on a countable alphabet.
Let $ A: \boldsymbol{\Sigma} \to \mathbb R $ be a bounded above and locally H\"older continuous potential. 
Assume the existence of an integer $ \hat I > I_{\mathbb F} $ such that
$$ \sup A|_{\bigcup_{i > \hat I} [i]} < \inf A|_{\bigcup_{i \in \mathbb F} [i]} - \left[ \text{Var}(A) + K_0 \left( \sup A - \inf A|_{\bigcup_{i \in \mathbb F} [i]} \right) \right]. $$ 
Then $ \beta_A = \beta_A(\hat I) $. Moreover, $ \mu \in \mathcal M_\sigma $ is an $A$-maximizing probability if, and only if, 
$ \text{supp}(\mu) \subseteq \Omega(A, \hat I) $. 
\end{theorem}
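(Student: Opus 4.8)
The plan is to reproduce, almost verbatim, the proof of theorem~\ref{existence maximizing}, isolating the two moments at which coerciveness intervened and replacing each by a statement that depends only on the inequality assumed here. Recall from the discussion above that coerciveness was used exactly twice: to guarantee inequality~(\ref{definition Ihat}), which is now a hypothesis, and to obtain the conclusion of the preceding proposition, which already furnishes, under precisely this inequality, the containment $ \text{supp}(\mu) \subseteq \Sigma_{\hat I} $ for every $A$-maximizing $ \mu $. First I would note that lemma~\ref{fixed beta} is still available: its proof invokes coerciveness nowhere, resting only on~(\ref{definition Ihat}), the lower bound~(\ref{lower bound muF}), the oscillation estimate of lemma~\ref{oscillation calibrated}, and the compact-case facts~(\ref{Aubry and contact}) and~(\ref{Mather and Aubry}). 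Hence $ \beta_A(I) = \beta_A(\hat I) $ for all $ I \ge \hat I $, and among probabilities supported in $ \Sigma_I $ only the $(A|_{\Sigma_{\hat I}})$-maximizing ones maximize $ A|_{\Sigma_I} $.

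Next I would prove $ \beta_A = \beta_A(\hat I) $. The bound $ \beta_A(\hat I) \le \beta_A $ is immediate, since any probability supported in $ \Sigma_{\hat I} $ belongs to $ \mathcal M_\sigma $. For the reverse, I would reestablish the central inequality~(\ref{central inequality}), that is, $ \int A \, d\mu \le \beta_A(\hat I) $ for every $ \mu \in \mathcal M_\sigma $, by repeating the argument given for theorem~\ref{existence maximizing}: by the ergodic decomposition it suffices to treat ergodic $ \mu $; one chooses a Birkhoff-generic point $ \mathbf z $, forms the empirical measures $ \nu_k $ converging weakly to $ \mu $, and approximates $ \mathbf z $ by period-$k$ points $ \mathbf y^k $ whose last $ K_0 $ symbols lie in $ \mathbb F $, producing $\sigma$-invariant measures $ \mu_k $ with the same weak limit. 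Since lemma~\ref{fixed beta} now applies, $ \int A \, d\mu_k \le \beta_A(\hat I) $ for each $ k $, and passing to the limit yields~(\ref{central inequality}). Because $ \beta_A(\hat I) $ is attained on the compact shift $ \Sigma_{\hat I} $, maximizing measures exist.

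Finally I would settle the characterization. If $ \mu \in \mathcal M_\sigma $ is $A$-maximizing, then $ \int A \, d\mu = \beta_A = \beta_A(\hat I) $, while the preceding proposition forces $ \text{supp}(\mu) \subseteq \Sigma_{\hat I} $; thus $ \mu $ maximizes $ A|_{\Sigma_{\hat I}} $ among probabilities supported in $ \Sigma_{\hat I} $, and~(\ref{Mather and Aubry}) gives $ \text{supp}(\mu) \subseteq \Omega(A, \hat I) $. Conversely, if $ \text{supp}(\mu) \subseteq \Omega(A, \hat I) \subseteq \Sigma_{\hat I} $, then~(\ref{Mather and Aubry}) yields $ \int A \, d\mu = \beta_A(\hat I) = \beta_A $, so $ \mu $ is $A$-maximizing.

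The one genuinely delicate point is the passage to the limit in~(\ref{central inequality}), for $ A $ is unbounded below and weak convergence $ \mu_k \to \mu $ does not by itself control $ \int A \, d\mu $. I would dispatch it exactly as the noncompact step of theorem~\ref{existence maximizing} does. If $ A \notin L^1(\mu) $, then $ \int A \, d\mu = -\infty $ and the inequality is trivial; otherwise Birkhoff's theorem identifies $ \int A \, d\mu = \lim_{k} \int A \, d\nu_k $, and the difference $ \int A \, d\nu_k - \int A \, d\mu_k = \frac{1}{k}\left[ S_k A(\mathbf z) - S_k A(\mathbf y^k) \right] $ is bounded above by $ \frac{1}{k}\left[ \text{Var}(A) + K_0 \left( \sup A - \inf A|_{\bigcup_{i \in \mathbb F} [i]} \right) \right] $, using the local H\"older estimate on the $ k - K_0 $ coordinates shared by $ \mathbf z $ and $ \mathbf y^k $ together with the finiteness of $ \inf A|_{\bigcup_{i \in \mathbb F} [i]} $ on the remaining $ K_0 $. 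Letting $ k \to \infty $ and recalling $ \int A \, d\mu_k \le \beta_A(\hat I) $ then gives $ \int A \, d\mu \le \beta_A(\hat I) $. No idea beyond those already present in section~\ref{maximizing} is required.
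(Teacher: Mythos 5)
Your proposal is correct and takes essentially the same route as the paper: lemma~\ref{fixed beta} (whose proof rests only on the hypothesized inequality~(\ref{definition Ihat})), the periodic-orbit approximation argument yielding the central inequality~(\ref{central inequality}), and the final-remark proposition together with~(\ref{Mather and Aubry}) for the characterization via $\Omega(A, \hat I)$. Your explicit Birkhoff-plus-H\"older treatment of the passage to the limit is in fact a welcome sharpening of the paper's terse ``passing to the limit,'' since $A$ may be unbounded below and weak convergence of $\{\mu_k\}$ alone would not justify that step.
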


We decided to discuss this generalized result at the end of the paper because the existence of $ \hat I $ in the above statement
seems to be just a technical assumption. Coerciveness, in turn, is compelling, as the works in thermodynamic formalism indicate.
Besides, it is important to have in mind that certain maximizing probabilities can be seen as zero temperature limits of Gibbs-equilibrium states
(see \cite{JMU0, Morris}).

Finally, we would like to point out that inequality~(\ref{definition Ihat}), which has proved to be so fundamental, is quite similar to the oscillation 
condition proposed in \cite{JMU1} (see definition 5.1 there). It is interesting to refind such a condition as a natural consequence of uniform oscillatory 
behaviour of calibrated sub-actions defined on compact approximations. 

\end{subsection}

\end{section}

{\footnotesize

}

\end{document}